\documentclass[review]{elsarticle}

\usepackage{lineno,hyperref}

\usepackage{amsmath, amsthm, amsfonts}
\usepackage{hyperref}
\usepackage{float}
\usepackage{graphicx}
\usepackage{multicol}
\usepackage{xcolor}

\newtheorem{thm}{Theorem}[section]
\newtheorem{theorem}[thm]{Theorem}
\newtheorem{corollary}[thm]{Corollary}
\newtheorem{definition}[thm]{Definition}
\newtheorem{lemma}[thm]{Lemma}
\newtheorem{proposition}[thm]{Proposition}
\newtheorem{example}[thm]{Example}
\theoremstyle{definition}
\theoremstyle{remark}



\journal{}









\bibliographystyle{elsarticle-num}

\begin{document}

\begin{frontmatter}

\title{On multiplicative Jacobi polynomials and function approximation through multiplicative series}

\author[mymainaddress]{\emph{Edinson Fuentes}}
\address{Facultad de Ciencias B\'asicas e Ingenier\'ia, Universidad de los Llanos}
\ead{edfuentes@unillanos.edu.co}

\author[mysecondaryaddress]{{\normalsize Luis E. Garza}\fnref{myfootnote}}
\address{Facultad de Ciencias, Universidad de Colima}
\ead{luis\_garza1@ucol.mx}

\author[mymainaddress]{{\normalsize Fabian Vel\'azquez C.}}
\address{Facultad de Ciencias B\'asicas e Ingenier\'ia, Universidad de los Llanos}
\ead{fvelasquez@unillanos.edu.co}

\fntext[myfootnote]{Corresponding author}
%

\address[mymainaddress]{Km. 12 V\'ia a Puerto L\'opez, Vda. Barcelona, Villavicencio, Meta, Colombia}
\address[mysecondaryaddress]{Bernal D\'iaz del Castillo 340, Colima, M\'exico}

\begin{abstract}
In this contribution, we introduce the multiplicative Jacobi polynomials that arise as one of the solutions of the  multiplicative Sturm-Liouville equation
\begin{equation*}
    \frac{d^*}{dx}\left( e^{(1-x^2)\omega(x)}\odot \frac{d^*y}{dx} \right)\oplus \left(e^{  n(n+\alpha+\beta+1)\omega(x)}\odot y\right)=1, \ x\in[-1,1],
\end{equation*}
where $\omega(x)=(1-x)^{\alpha}(1+x)^{\beta}$ with $\alpha, \beta >-1$ real numbers and $n$ is a non-negative integer number. We extend some properties of classical Jacobi polynomials to the multiplicative case. In particular, we present several properties of multiplicative Legendre polynomials and multiplicative Chebyshev polynomials of first and second kind. We also prove that every real and positive function can be expressed as a multiplicative Jacobi-Fourier series and show that such functions can be approximated by the corresponding partial products of these series. We illustrate the obtained results with some examples.
\end{abstract}

\begin{keyword}
Multiplicative Jacobi polynomials \sep function approximation \sep multiplicative differential equations.
\MSC[2010]  42C05 \sep 33C45 \sep 65D15 \sep 34A99.
\end{keyword}

\end{frontmatter}


\section{Introduction}

Classical differential calculus, also known as Newtonian calculus, was originated in the $17$th century from the classical definition of the derivative of a function $f$ given by 
\begin{equation*}
    f'(x):=\frac{d}{dx}f(x)=\lim_{h\to 0}\frac{f(x+h)-f(x)}{h},
\end{equation*}
as long as the limit exists. Alternative definitions have been proposed since then, given rise to other branches of calculus different to the Newtonian calculus, commonly known as non-Newtonian calculus. One of these alternative definitions is the multiplicative derivative introduced by Grossman y Katz \cite{Grossman}, defined as follows.
\begin{definition}
Let $\mathbb{R}_{e}:=\{e^x:x\in\mathbb{R}\}$ be the set of exponential real numbers and let $f:\mathbf{A} \subseteq \mathbb{R}  \rightarrow \mathbb{R}_{e}$ be a function. The multiplicative derivative or $^*$-derivative of $f$ is defined by
    \begin{equation*}
        f^*(x):=\frac{d^*}{dx}f(x)=\lim_{h \to 0} \left [\frac{f(x+h)}{f(x)} \right]^{1/h},
    \end{equation*} 
if the above limit exists and it is positive.
\end{definition}

The above definition has helped consolidate what is known as multiplicative calculus \cite{Bashirov-2,Stanley,Uzer}. Although one of its main limitations is that it is only applicable to positive functions, multiplicative calculus has proven to be a more efficient tool for addressing certain problems that involve exponentially varying functions or functions of exponential order. Many natural phenomena, such as population growth \cite{20} or heat transfer \cite{Sivasankaran}, exhibit this type of behavior.

In this context, references \cite{18,19,20} present a new method for solving some nonlinear equations using a multiplicative calculus approach and apply the results to phenomena modeled by exponential growth functions. Furthermore, they demonstrate numerically that their results are significantly better than those obtained with traditional methods. Conversely, \cite{Uzer} shows that better results are achieved by using the multiplicative derivative when tables and graphs are expressed in logarithmic scales. Additionally, \cite{Yalcin-3} applies multiplicative calculus in numerical methods for solving multiplicative partial differential equations, while classical methods for solving differential equations, such as the Runge-Kutta method, are discussed in \cite{Riza}. Moreover, there are applications in the analysis of biomedical images \cite{Florack}. As a consequence, multiplicative calculus has emerged as a growing field that offers alternative solutions to classical problems. 

On the other hand, orthogonal polynomials constitute a well-known example of the so-called special functions. These polynomials have been widely used in different fields such as differential equations in Sturm-Liouville problems (from a classical calculus standpoint), approximation theory, numerical analysis, quantum physics, among others (we refer the reader to \cite{Boy,Burden,Chihara,Ismail,Szego} for more details). One of the orthogonal families that stands out for its many applications, specially in approximation processes, are the Jacobi Polynomials $\{P_n^{(\alpha,\beta)}\}_{n\geq 0}$ with $\alpha,\beta >-1$. They appear as the polynomial solutions for the Jacobi differential equation, which is a Sturm-Liouville equation of the form (see \cite{Chihara})
\begin{equation*}
\frac{d}{dx}\left[ (1-x^2)\omega(x)\frac{dy}{dx} \right]+n(n+\alpha+\beta+1)\omega(x)y=0, \ \ x\in[-1,1],
\end{equation*}
where $\omega(x)=(1-x)^{\alpha}(1+x)^{\beta}$ is a weight function and $n$ is a non-negative integer number. Every Jacobi polynomial has the important representation (see \cite{Szego})
\begin{equation}\label{Jac-Mul-26}
    P_n^{(\alpha,\beta)}(x)=\frac{1}{n!}\left[(\alpha+1)_n+\sum_{k=1}^{n}\binom{n}{k}(\alpha+\beta+n+1)_{k}\frac{(\alpha+1)_n}{(\alpha+1)_k}\left( \frac{x-1}{2} \right)^k\right], \ \ n\geq 1,
\end{equation}
where $(\alpha)_n$ is the Pochhammer symbol or shifted factorial, defined by 
\begin{equation}\label{Jac-Mul-27}
  (\alpha)_0:=1, \ \ (\alpha)_n:=\alpha(\alpha+1)\cdots(\alpha+n-1), \ \ n> 0.
\end{equation}
They are orthogonal with respect the weight function $\omega$ on the interval $[-1,1]$. That is, 
\begin{equation}\label{Jac-Mul-30}
    \int_{-1}^1 P_n^{(\alpha,\beta)}(x)P_m^{(\alpha,\beta)}(x)\omega(x)dx=\frac{2^{\alpha+\beta+1}}{2n+\alpha+\beta+1}\frac{\Gamma(n+\alpha+1)\Gamma(n+\beta+1)}{n!\Gamma(n+\alpha+\beta+1)}\delta_{n,m},
\end{equation}
where $\Gamma(z)=\int_{0}^{\infty}x^{z-1}e^{-x}dx$ is the Gamma function defined for every complex $z$ with positive real part and $\delta_{n,m}$ is the Kronecker delta. It is well-known that the Gamma function satisfies 
\begin{equation*}
    \frac{\Gamma(\alpha+n)}{\Gamma(\alpha)}=\alpha(\alpha+1)\cdots(\alpha+n-1)=(\alpha)_n.
\end{equation*}
Further properties of classical Jacobi polynomials are the following \cite{Chihara}.
\begin{proposition}\label{Jac-Mul-31}
Let $\{P_n^{(\alpha,\beta)} \}_{n\geq 0}$ be the sequence of Jacobi polynomials and $\omega(x)=(1-x)^{\alpha}(1+x)^{\beta}$ ($\alpha,\ \beta >-1$). Then,
    \begin{enumerate}
        \item $\frac{d}{dx}\left(P_n^{(\alpha,\beta)}(x)\right)=\frac{1}{2}(n+\alpha+\beta+1)P_{n-1}^{(\alpha+1,\beta+1)}(x)$.
        \item Rodrigues' formula
\begin{equation*}
    P_n^{(\alpha,\beta)}(x)=\frac{1}{(-1)^n n!2^n\omega(x)}\frac{d^n}{dx^n}\left[(1-x^2)^n\omega(x)\right], \ \ n\geq 0.
\end{equation*}
        \item Three term recurrence relation of the form
\begin{equation*}
    A(n,\alpha, \beta)P_{n}^{(\alpha,\beta)}(x)=B(x,n,\alpha, \beta)P_{n-1}^{(\alpha,\beta)}(x)-C(n,\alpha, \beta)P_{n-2}^{(\alpha,\beta)}(x), \ \ n\geq 1,
\end{equation*}
with $P_{0}^{(\alpha,\beta)}(x)=1$ and $P_{-1}^{(\alpha,\beta)}(x)=0$, where
\begin{equation}\label{Jac-Mul-32}
\begin{split}
    A(n,\alpha, \beta)&=2n(n+\alpha+\beta)(2n+\alpha+\beta-2),\\
    B(x,n,\alpha, \beta)&=(2n+\alpha+\beta-1)\left[(2n+\alpha+\beta)(2n+\alpha+\beta-2)x+\alpha^2-\beta^2 \right],\\
    C(n,\alpha, \beta)&=2(n+\alpha-1)(n+\beta-1)(2n+\alpha+\beta).
\end{split}    
\end{equation}
\item $P_n^{(\alpha,\beta)}(1)=\binom{n+\alpha}{n}$ and $P_n^{(\alpha,\beta)}(-x)=(-1)^nP_n^{(\beta,\alpha)}(x)$. The symbol $\binom{\cdot}{\cdot}$ represents the generalized binomial coefficient and $\binom{n+\alpha}{n}=\frac{\Gamma(n+\alpha+1)}{\Gamma(n+1)\Gamma(\alpha+1)}$.
\item $P_n^{(\alpha,\beta)}(x)$ is a polynomial of degree $n$ whose leading coefficient is 
\begin{equation*}
    k_n\equiv k_n(\alpha,\beta)=\frac{1}{2^n}\binom{2n+\alpha+\beta}{n}, \ \ n\geq 0.
\end{equation*}
    \end{enumerate}
\end{proposition}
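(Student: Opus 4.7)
The plan is to derive all five properties either directly from the explicit representation (\ref{Jac-Mul-26}) or from the general theory of orthogonal polynomials applied to the weight $\omega$ on $[-1,1]$. Properties 5 and 4 come immediately from (\ref{Jac-Mul-26}): the coefficient of $x^n$ is read off the $k=n$ term in the sum, and after simplifying the Pochhammer ratio via $(\alpha+\beta+n+1)_n=\Gamma(2n+\alpha+\beta+1)/\Gamma(n+\alpha+\beta+1)$, one recovers $k_n$. Setting $x=1$ annihilates every term with $k\geq 1$ and leaves $(\alpha+1)_n/n!=\binom{n+\alpha}{n}$. The parity relation $P_n^{(\alpha,\beta)}(-x)=(-1)^n P_n^{(\beta,\alpha)}(x)$ follows because $\omega$ becomes the Jacobi weight with parameters swapped under $x\mapsto -x$, and then uniqueness of orthogonal polynomials (up to a multiplicative constant) combined with matched leading coefficients identifies both sides.

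For Property 2 (Rodrigues' formula), I would define $Q_n(x)=\frac{1}{(-1)^n n!2^n\omega(x)}\frac{d^n}{dx^n}[(1-x^2)^n\omega(x)]$, check via Leibniz's rule that the $\omega(x)$ factor cancels and $Q_n$ is a polynomial of degree $n$, and then verify orthogonality: for $m<n$, the integral $\int_{-1}^1 Q_n(x)\, x^m \omega(x)\,dx$ reduces through $n$ successive integrations by parts to zero, because $(1-x^2)^n$ and its first $n-1$ derivatives vanish at $\pm 1$ (killing every boundary term) while $d^n x^m/dx^n=0$. Matching leading coefficients with Property 5 then forces $Q_n=P_n^{(\alpha,\beta)}$. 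Property 1 is the cleanest consequence of Rodrigues': differentiating the formula once and regrouping the factor $(1-x^2)\omega(x)$ with new exponents produces a Rodrigues representation of degree $n-1$ for the weight $(1-x)^{\alpha+1}(1+x)^{\beta+1}$, with the predicted constant $(n+\alpha+\beta+1)/2$ emerging from the chain rule.

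Finally, for Property 3, I would invoke the general theorem that any orthogonal sequence on the real line satisfies a three-term recurrence of the form $x P_{n-1}=\lambda_n P_n+\mu_n P_{n-1}+\nu_n P_{n-2}$. The coefficients $A,B,C$ are then pinned down by three inputs: the leading coefficient $k_n$ from Property 5 (fixing $\lambda_n$), the subleading coefficient of $P_n$ (read from the $k=n-1$ term of (\ref{Jac-Mul-26}), fixing $\mu_n$), and the squared norm (\ref{Jac-Mul-30}) together with the identity $\nu_n=\lambda_n\|P_{n-1}\|^2/\|P_{n-2}\|^2$.

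The main obstacle throughout is purely computational: the repeated Pochhammer and Gamma-function bookkeeping, particularly in bringing $\mu_n$ and $\nu_n$ into the compact closed form (\ref{Jac-Mul-32}). There is no conceptual hurdle, since orthogonality together with the explicit representation determines every quantity; the work lies entirely in algebraic simplification.
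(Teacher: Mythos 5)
The paper never proves Proposition~\ref{Jac-Mul-31}: it is quoted as standard background with a citation to Chihara/Szeg\H{o}, so there is no in-paper argument to compare against. Your sketch is the classical textbook development --- reading properties 4 and 5 off the explicit representation \eqref{Jac-Mul-26}, establishing Rodrigues' formula by checking that the right-hand side is a polynomial of degree $n$, proving its orthogonality to lower degrees by $n$ integrations by parts (the boundary terms vanish because of the factor $(1-x^2)^n$), matching leading coefficients, and getting the recurrence from the general three-term recurrence theorem together with the norm \eqref{Jac-Mul-30} --- and it is sound in outline.

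Two points would need repair if written out. First, for property 1 the stated mechanism (``differentiate Rodrigues once and regroup, the constant coming from the chain rule'') is not enough as it stands: differentiating $\omega(x)^{-1}\frac{d^n}{dx^n}\left[(1-x^2)^n\omega(x)\right]$ produces two terms, and to collapse them into $\tfrac12(n+\alpha+\beta+1)$ times the Rodrigues expression for $P_{n-1}^{(\alpha+1,\beta+1)}$ you need the auxiliary identity obtained by applying $\frac{d^n}{dx^n}$ (Leibniz) to the first-order relation $(1-x^2)u'=\left[(\beta-\alpha)-(2n+\alpha+\beta)x\right]u$ satisfied by $u=(1-x)^{n+\alpha}(1+x)^{n+\beta}$; alternatively, differentiate \eqref{Jac-Mul-26} term by term, or show that $\bigl(P_n^{(\alpha,\beta)}\bigr)'$ is orthogonal to all polynomials of degree less than $n-1$ with respect to $(1-x)^{\alpha+1}(1+x)^{\beta+1}$ (integration by parts; the boundary terms vanish since $\alpha+1,\beta+1>0$) and compare leading coefficients. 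Second, in the recurrence the coefficient of $P_{n-2}$ is $\nu_n=\frac{k_{n-2}}{k_{n-1}}\cdot\frac{\|P_{n-1}^{(\alpha,\beta)}\|_{\omega}^2}{\|P_{n-2}^{(\alpha,\beta)}\|_{\omega}^2}$, i.e.\ it involves $k_{n-2}/k_{n-1}$ (your $\lambda_{n-1}$), not $\lambda_n$ as written; with that index corrected, the Gamma-function bookkeeping does yield \eqref{Jac-Mul-32}. Finally, in the reflection formula it is worth stating explicitly that $k_n(\alpha,\beta)=k_n(\beta,\alpha)$, which is exactly what makes the proportionality constant in your uniqueness argument equal to $(-1)^n$.
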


Moreover, the sequence $\{P_n^{(\alpha,\beta)} \}_{n\geq 0}$ of Jacobi polynomials constitutes a countable, dense and complete (total) basis (see \cite{Ismail,Szego}) for the weighted Hilbert space
\begin{equation*}
    \mathbf{L}^2\left([-1,1],\omega\right):=\left\{ f:[-1,1] \rightarrow \mathbb{R}  \ : \ \langle f,f\rangle_{\omega}= \int_{-1}^1  (f(x))^2{\omega(x)dx} < \infty  \right \},
\end{equation*}
with norm $\|f\|_{\omega}^2= \langle f,f\rangle_{\omega}$. As a consequence, any $f\in \mathbf{L}^2\left([-1,1],\omega\right)$ can be uniquely expressed as
\begin{equation*}
    f(x)=\sum_{n=0}^{\infty}\frac{\langle f, P_n^{(\alpha,\beta)} \rangle_{\omega}}{\|P_n^{(\alpha,\beta)} \|^2_{\omega}}P_n^{(\alpha,\beta)}(x), \ \ x\in [-1,1],
\end{equation*}
where the Jacobi-fourier series converges to $f$ in the norm $\mathbf{L}^2\left([-1,1],\omega\right)$, i.e. 
\begin{equation}\label{Jac-Mul-56}
    \lim_{N\to \infty}\left\|f-\sum_{n=0}^{N}\frac{\langle f, P_n^{(\alpha,\beta)} \rangle_{\omega}}{\|P_n^{(\alpha,\beta)} \|^2_{\omega}}P_n^{(\alpha,\beta)}(x)\right\|_{\omega}=0.
\end{equation}

On the other hand, several contributions on orthogonal polynomials, within the framework of multiplicative calculus, have recently appeared in the literature. For instance, multiplicative differential equations solved using multiplicative power series give rise to multiplicative orthogonal polynomials, and some spectral properties are studied. Examples of such families include: multiplicative Laguerre polynomials \cite{Kosunalp}, multiplicative Hermite polynomials \cite{Yalcin-2}, and multiplicative Chebyshev polynomials of the first and second kind \cite{Yalcin}, as well as multiplicative Legendre polynomials \cite{Goktas}. It is well-known that Chebyshev and Legendre polynomials are particular cases of Jacobi polynomials (see \cite{Chihara}), and, as far as we know, properties of multiplicative Jacobi polynomials have not been studied from the multiplicative calculus standpoint. Thus, the aim of this contribution is to introduce multiplicative Jacobi polynomials, study some of their properties, and explore their application in approximating positive functions from the multiplicative calculus standpoint.

The structure of the manuscript is as follows: Section $2$ contains properties related to both the multiplicative derivative and integral. Moreover, we present some properties of multiplicative series and other definitions needed in the sequel. In Section $3$, we introduce multiplicative Jacobi polynomials as finite multiplicative series solutions of the multiplicative Jacobi equation, which is a particular multiplicative Sturm-Liouville equation. Section $4$ deals with the properties of multiplicative Jacobi polynomials, including some properties of multiplicative Legendre polynomials and multiplicative Chebyshev polynomials of the first and second kind. Finally, in the last section, we prove that every positive function can be expressed as a multiplicative Jacobi-Fourier series and study its convergence properties. We also present illustrative examples where the use of multiplicative Jacobi-Fourier series for approximating functions is more efficient than using classical Jacobi-Fourier series, which motivates the study of properties of multiplicative series.

\section{Multiplicative calculus and its properties }

The multiplicative calculus has the following properties, that are similar to those of the classical calculus. The proofs can be found in \cite{Bashirov-2,Stanley,Uzer}.
\begin{proposition}\label{Jac-Mul-36}
(see \cite{Bashirov-2}) Let $f,g$ be multiplicative differentiable and $\phi$ be classical differentiable at $x$. The multiplicative derivative verifies the following properties: 
    \begin{enumerate}
\item $(k f)^*(x)=f^*(x)$, $k \in \mathbb{R}_{e}$,
\item $(fg)^*(x)=f^*(x)g^*(x)$,
\item $(f/g)^*(x)=f^*(x)/g^*(x)$,
\item $(f^{\phi})^*(x)=f^*(x)^{\phi(x)}f(x)^{\phi'(x)}$. In particular, if $f$ is a constant function 
\begin{equation}\label{Jac-Mul-33}
    (f^{\phi})^{*(n)}(x)=f(x)^{\phi^{(n)}(x)}, \ \ n\geq 0.
\end{equation}
\item $(f\circ \phi)^*(x)=(f^*\circ \phi)(x)^{\phi'(x)}$,
\item $(f+g)^*(x)=f^*(x)^{\frac{f(x)}{f(x)+g(x)}}g^*(x)^{\frac{g(x)}{f(x)+g(x)}}$.
    \end{enumerate}
\end{proposition}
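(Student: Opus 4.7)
The plan is to reduce every identity to the classical derivative through the key bridge relation
\begin{equation*}
f^*(x) = \exp\left(\frac{f'(x)}{f(x)}\right) = \exp\bigl((\ln f)'(x)\bigr),
\end{equation*}
which I would first establish by taking logarithms inside the defining limit of $f^*$ and recognizing the resulting difference quotient as $(\ln f)'(x)$. Once this bridge is in place, each of the six identities becomes a one-line consequence of a standard rule for ordinary derivatives applied to the logarithm of the relevant expression. Throughout, the positivity of $f$ and $g$, guaranteed by the codomain $\mathbb{R}_e$, is exactly what makes $\ln f$ and $\ln g$ well-defined and as smooth as $f$ and $g$ themselves.

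For (1), one writes $\ln(kf)^* = (\ln k + \ln f)' = (\ln f)' = \ln f^*$. For (2) and (3), the additive and subtractive decompositions of the logarithm yield $(\ln(fg))' = (\ln f)' + (\ln g)'$ and $(\ln(f/g))' = (\ln f)' - (\ln g)'$, which exponentiate to $f^*g^*$ and $f^*/g^*$ respectively. For (4), expanding $\ln(f^\phi) = \phi \ln f$ and applying the classical product rule gives $(\phi \ln f)' = \phi' \ln f + \phi(\ln f)'$; exponentiation then produces $f^{\phi'}(f^*)^\phi$. The stated special case when $f$ is constant reduces at each step to $c^{\psi}$ being sent to $c^{\psi'}$, so a straightforward induction on $n$ delivers the iterated formula $(f^\phi)^{*(n)}(x) = f(x)^{\phi^{(n)}(x)}$.

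For (5), the chain rule gives $(\ln(f \circ \phi))'(x) = (\ln f)'(\phi(x))\,\phi'(x) = \phi'(x)\ln f^*(\phi(x))$, and exponentiating produces $(f^* \circ \phi)^{\phi'}$. For (6), I would compute
\begin{equation*}
\bigl(\ln(f+g)\bigr)' = \frac{f' + g'}{f+g} = \frac{f}{f+g}\cdot\frac{f'}{f} + \frac{g}{f+g}\cdot\frac{g'}{g} = \frac{f}{f+g}\ln f^* + \frac{g}{f+g}\ln g^*,
\end{equation*}
and exponentiate to recover the claimed weighted product of $f^*$ and $g^*$.

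The only mild obstacle I foresee is the bookkeeping for the iterated identity in (4): one must verify that applying the multiplicative derivative to $c^\psi$ with $c$ constant produces an expression of exactly the same shape $c^{\psi'}$, so that $n$-fold iteration simply transports the classical derivative from $\phi$ to $\phi^{(n)}$. Everything else is a direct translation between multiplicative and classical calculus through the logarithm, and all six items fit comfortably on a single page once the bridge relation is established.
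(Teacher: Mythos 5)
Your proof is correct, and it follows essentially the same route as the paper and its cited source: the paper records exactly the bridge relation $f^*(x)=e^{(\ln\circ f)'(x)}=e^{f'(x)/f(x)}$ (its equation \eqref{Jac-Mul-23-b}) and defers the proofs of these six rules to \cite{Bashirov-2}, where they are obtained by the same logarithmic reduction to classical derivative rules that you carry out. Your handling of the iterated case in item (4), via $c^*=1$ and induction (equivalently, directly from the definition $f^{*(n)}=e^{(\ln\circ f)^{(n)}}$), is also consistent with the paper's conventions.
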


In particular, if $f$ is positive definite and the classical derivative exists, there exists a relationship between the classical derivative and the multiplicative derivative (see \cite{Stanley})
\begin{equation}\label{Jac-Mul-23-b}
    f^*(x):=e^{ (\ln \circ f)'(x)}=e^{\frac{f'(x)}{f(x)}},
\end{equation}
where $(\ln \circ f)(x)=\ln(f(x))$. Moreover, the $n$-th multiplicative derivative of the positive function $f$ is defined by  $f^{*(n)}(x):=e^{ (\ln \circ f)^{(n)}(x)}$ (see \cite{Bashirov}).

\begin{proposition}\label{Jac-Mult-24}
    The function $f$ is a positive constant on $ (a,b)$ if and only if $f^*(x)=1$ for all $x\in (a,b)$.
\end{proposition}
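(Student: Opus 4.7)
The plan is to use the identification $f^*(x) = e^{(\ln\circ f)'(x)}$ from equation (\ref{Jac-Mul-23-b}) to reduce the statement to the classical fact that a function on an interval is constant iff its classical derivative vanishes. Because the hypothesis guarantees that $f$ is positive (so that the multiplicative derivative is defined), composing with the logarithm is legitimate throughout.

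For the forward implication, suppose $f \equiv c$ with $c > 0$ on $(a,b)$. Then for any $x \in (a,b)$ and all sufficiently small $h$, the quotient $f(x+h)/f(x) = c/c = 1$, so $[f(x+h)/f(x)]^{1/h} = 1$. Passing to the limit gives $f^*(x) = 1$. This half requires no appeal to the logarithmic formula; it follows directly from the definition.

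For the reverse implication, assume $f^*(x) = 1$ for every $x \in (a,b)$. Since $f$ is multiplicative differentiable on $(a,b)$ and positive, the function $\ln \circ f$ is classically differentiable there, and by (\ref{Jac-Mul-23-b}),
\begin{equation*}
1 = f^*(x) = e^{(\ln\circ f)'(x)}, \qquad x \in (a,b).
\end{equation*}
Taking logarithms yields $(\ln\circ f)'(x) = 0$ on $(a,b)$. By the classical mean value theorem, $\ln\circ f$ is constant on $(a,b)$, say $\ln(f(x)) = k$ for some $k \in \mathbb{R}$, and exponentiating gives $f(x) = e^{k}$, a positive constant.

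The only subtle point is justifying that existence and positivity of the multiplicative derivative on the interval implies classical differentiability of $\ln\circ f$ on $(a,b)$; this is precisely the content of the relation (\ref{Jac-Mul-23-b}) quoted from \cite{Stanley} just above the statement, so it can be invoked directly. No real obstacle arises beyond applying this identification carefully.
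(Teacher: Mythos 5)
Your proof is correct and follows essentially the same route as the paper: both directions rest on the identification $f^*(x)=e^{(\ln\circ f)'(x)}=e^{f'(x)/f(x)}$ from \eqref{Jac-Mul-23-b} together with the classical mean value theorem for the converse. The only cosmetic difference is that you verify the forward implication directly from the limit definition (and phrase the converse via $(\ln\circ f)'=0$ rather than $f'=0$), which changes nothing of substance.
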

\begin{proof}
    If $f(x)=c>0$ for all $x\in(a,b)$, the result follows at once from \eqref{Jac-Mul-23-b}. Conversely, if $f^*(x)=1$ for all $x\in (a,b)$, again from \eqref{Jac-Mul-23-b} we have $f^*(x)=e^{\frac{f'(x)}{f(x)}}=1$, so $f'(x)=0$ and the required result is immediate from the mean value theorem of classical calculus.   
\end{proof}

An analog of the Riemann integral for the multiplicative calculus was defined in \cite{Bashirov-2}. Moreover, the authors prove the following result that relates the multiplicative integral with the Riemann integral. 
\begin{proposition}
\cite{Bashirov-2} Sea $f:[a,b]\rightarrow \mathbb{R}_e$. If $f$ is positive and Riemann integrable on $[a,b]$, then $f$ is multiplicative integrable or $^*$-integrable and 
\begin{equation}\label{Jac-Mul-29}
    \int_{a}^bf(x)^{dx}=\exp \left( \int_{a}^b(\ln f(x))dx \right)=e^{\int_{a}^b(\ln f(x))dx}.
\end{equation}    
Conversely, if $f$ is $^*$-integrable on $[a,b]$, then 
$   \int_a^b f(x)dx=\ln \int_a^b \left( e^{f(x)}\right)^{dx}.$
\end{proposition}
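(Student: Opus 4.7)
The plan is to reduce the statement to the classical Riemann integral by taking logarithms of the multiplicative Riemann sums that define $\int_a^b f(x)^{dx}$. First, I would recall the definition of the multiplicative integral from \cite{Bashirov-2}: for a tagged partition $P = \{a = x_0 < x_1 < \cdots < x_n = b\}$ with tags $\xi_i \in [x_{i-1}, x_i]$, mesh $\|P\| = \max_i \Delta x_i$ and $\Delta x_i = x_i - x_{i-1}$, the multiplicative Riemann sum is $\prod_{i=1}^n f(\xi_i)^{\Delta x_i}$, and $\int_a^b f(x)^{dx}$ is the limit of these products as $\|P\| \to 0$, when this limit exists.

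The core identity driving the proof is that taking the natural logarithm of a multiplicative Riemann sum yields a classical Riemann sum,
\begin{equation*}
\ln \prod_{i=1}^n f(\xi_i)^{\Delta x_i} = \sum_{i=1}^n \ln f(\xi_i)\,\Delta x_i.
\end{equation*}
Hence, provided $\ln \circ f$ is Riemann integrable on $[a,b]$, the right-hand side converges to $\int_a^b \ln f(x)\,dx$ as $\|P\|\to 0$. By continuity of the exponential function, applying $\exp$ to both sides shows that the multiplicative Riemann sums converge to $\exp\bigl(\int_a^b \ln f(x)\,dx\bigr)$, which is \eqref{Jac-Mul-29}. Riemann integrability of $\ln \circ f$ would follow from the standard composition theorem: since $f$ is Riemann integrable it is bounded, and combining this with its positivity (together with the implicit assumption that $f$ is bounded away from $0$, natural in view of the codomain $\mathbb{R}_e$), the logarithm is continuous on the closed range of $f$ and hence $\ln \circ f$ is Riemann integrable.

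For the converse, I would apply the direct part to the positive function $g(x):=e^{f(x)}$. This yields $\int_a^b \bigl(e^{f(x)}\bigr)^{dx} = \exp\bigl(\int_a^b \ln e^{f(x)}\,dx\bigr) = \exp\bigl(\int_a^b f(x)\,dx\bigr)$, and taking natural logarithms on both sides produces the claimed identity $\int_a^b f(x)\,dx = \ln \int_a^b \bigl(e^{f(x)}\bigr)^{dx}$.

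The main obstacle I foresee is the subtle analytic point of justifying that $\ln \circ f$ is Riemann integrable from the bare hypothesis that $f$ is positive and Riemann integrable; this fails if $f$ can approach $0$, so one either invokes an additional boundedness-away-from-zero assumption (implicit in the intended use) or an improper-integral argument. The corresponding issue in the converse direction—Riemann integrability of $e^{f}$ from $^*$-integrability of $f$—is milder, since any function for which the multiplicative Riemann sums converge to a positive limit must be bounded, and $\exp$ is continuous on its range.
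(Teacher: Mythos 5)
The paper offers no proof of this proposition --- it is quoted directly from \cite{Bashirov-2} --- so there is no in-paper argument to compare yours with; judged on its own, your sketch is the standard proof of the cited result. Taking logarithms of the multiplicative Riemann products $\prod_{i} f(\xi_i)^{\Delta x_i}$ to obtain classical Riemann sums of $\ln\circ f$ and then exponentiating the limit is exactly the right mechanism for \eqref{Jac-Mul-29}, and you are right to flag that positivity plus Riemann integrability of $f$ does not by itself yield Riemann integrability of $\ln\circ f$ (for $f(x)=x$ on $(0,1]$ with $f(0)=1$ the multiplicative products have no limit), so an implicit hypothesis such as $\inf f>0$ is needed; note, though, that the codomain $\mathbb{R}_e=(0,\infty)$ does not by itself supply it.

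The one step that needs tightening is the converse. To apply the direct part to $g=e^{f}$ you must know that $g$ is Riemann integrable, and you also need $f$ itself to be Riemann integrable so that the left-hand side $\int_a^b f(x)\,dx$ exists; boundedness of $f$ together with continuity of $\exp$ is not enough, since a continuous function composed with a merely bounded function need not be Riemann integrable. The repair is to run your logarithm correspondence backwards: $^*$-integrability of $f$ means the products $\prod_i f(\xi_i)^{\Delta x_i}$ converge, hence the Riemann sums of $\ln f$ converge, so $\ln f$ is Riemann integrable and bounded; the composition theorem then gives Riemann integrability of $f=\exp(\ln f)$ and of $e^{f}=\exp(f)$, after which your computation $\int_a^b\bigl(e^{f(x)}\bigr)^{dx}=\exp\bigl(\int_a^b f(x)\,dx\bigr)$ and the final logarithm go through.
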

Properties of the multiplicative integral that are analog to those of the Riemann integral are the following. 
\begin{proposition} 
\cite{Bashirov-2} Let $f,g:[a,b]\rightarrow \mathbb{R}_e$ be bounded and $^*$-integrable functions and $\phi:[a,b]\rightarrow \mathbb{R}_e$ be usual differentiable at $x\in[a,b]$. The $^*$-integral verifies the following properties:  
    \begin{enumerate}
        \item $\int_{a}^b\left(f(x)^k\right)^{dx}=\left(\int_{a}^bf(x)^{dx}\right)^k$, \ \ $k\in \mathbb{R}$,
        \item $\int_{a}^b\left(f(x)g(x)\right)^{dx}=\left(\int_{a}^bf(x)^{dx} \right)\left(\int_{a}^bg(x)^{dx}\right)$,
        \item $\int_{a}^b\left(\frac{f(x)}{g(x)}\right)^{dx}=\frac{\int_{a}^bf(x)^{dx}}{\int_{a}^bg(x)^{dx}}$,     
        \item Multiplicative integration by parts formula:
$$\int_{a}^b\left(f^*(x)^{\phi(x)}\right)^{dx}=\frac{f(b)^{\phi(b)}}{f(a)^{\phi(a)}}\left[\int_{a}^b\left(f(x)^{\phi'(x)}\right)^{dx}\right]^{-1}.$$
    \end{enumerate}
\end{proposition}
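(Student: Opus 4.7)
The unifying idea is to transport each multiplicative integral identity to an ordinary Riemann integral identity via the bridge formula \eqref{Jac-Mul-29}, namely $\int_{a}^{b} f(x)^{dx} = \exp\!\left(\int_a^b \ln f(x)\,dx\right)$, apply the corresponding classical statement for Riemann integrals, and push the outcome back through the exponential.

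For item (1), I would first use $\ln(f(x)^k) = k\ln f(x)$, pull the constant $k$ out of the Riemann integral, and exponentiate, which yields $\exp\!\left(k\int_a^b \ln f\,dx\right) = \left[\exp\!\left(\int_a^b \ln f\,dx\right)\right]^k$. For items (2) and (3) I would apply $\ln(fg) = \ln f + \ln g$ and $\ln(f/g) = \ln f - \ln g$ together with linearity of the Riemann integral, and then the identity $\exp(u\pm v) = \exp(u)\exp(v)^{\pm 1}$. Each of these is a short, routine computation; the only hypotheses required are positivity of $f,g$ on $[a,b]$ (so the logarithms are defined) and Riemann integrability of $\ln f$, $\ln g$, both guaranteed by $^*$-integrability.

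For the multiplicative integration by parts formula (4), the engine is the identity $f^*(x) = e^{f'(x)/f(x)}$ from \eqref{Jac-Mul-23-b}, which rewrites the multiplicative derivative as a classical logarithmic derivative. Taking logarithms of the integrand I get $\ln\bigl(f^*(x)^{\phi(x)}\bigr) = \phi(x)\,\dfrac{f'(x)}{f(x)} = \phi(x)\,(\ln f(x))'$. Classical integration by parts then produces $\int_a^b \phi(x)(\ln f(x))'\,dx = \phi(b)\ln f(b) - \phi(a)\ln f(a) - \int_a^b \phi'(x)\ln f(x)\,dx$. Exponentiating both sides, the boundary term becomes $f(b)^{\phi(b)}/f(a)^{\phi(a)}$, while $\exp\!\bigl(\int_a^b \phi'(x)\ln f(x)\,dx\bigr) = \int_a^b \bigl(f(x)^{\phi'(x)}\bigr)^{dx}$ by a second application of \eqref{Jac-Mul-29}, and a rearrangement delivers the stated formula.

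The main obstacle is not conceptual but bookkeeping: I have to check that all hypotheses line up so that $\ln f$ and $\ln g$ are legitimate Riemann-integrable functions on $[a,b]$, and that the classical integration by parts is actually available (which is where the usual differentiability of $\phi$ enters in item (4)). Once those technicalities are in place, items (1)--(4) fall out uniformly from the $\exp$--$\ln$ transfer, with (4) being the only item that requires more than a one-line computation.
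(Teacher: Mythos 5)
Your proof is correct: the paper itself gives no proof of this proposition (it is quoted from the reference \cite{Bashirov-2}), and your $\exp$--$\ln$ transfer argument — including the classical integration by parts yielding the boundary quotient $f(b)^{\phi(b)}/f(a)^{\phi(a)}$ and the reciprocal of $\int_a^b\bigl(f(x)^{\phi'(x)}\bigr)^{dx}$ — is exactly the standard derivation underlying the cited result. No gaps; the hypotheses you flag (positivity and Riemann integrability of $\ln f,\ln g$, differentiability of $\phi$) are precisely what is needed.
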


We will also need the following definitions and properties of multiplicative series.
\begin{definition}
Let $\{a_n \}_{n\geq 0}$ be a sequence in $\mathbb{R}_{e}$, and $x_0\in \mathbb{R}$ be a fixed point. Then the product $\prod_{n=0}^{\infty} (a_n)^{(x-x_0)^n}$ is called a multiplicative power series.
\end{definition}

\begin{theorem}\label{Jac-Mul-5}
\cite[Theorem 3.2]{Yalcin-2}
    Assume $y(x)$ is an infinitely multiplicative differentiable function at a neighbourhood of a point $x_0$. Then $y(x)$ has the multiplicative power series expansion
    \begin{equation}\label{Jac-Mul-61}
        y(x)=\prod_{n=0}^{\infty} (a_n)^{(x-x_0)^n}, \ (a_n\in \mathbb{R}_e).
    \end{equation}
\end{theorem}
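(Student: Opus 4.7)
The plan is to transfer the statement to the classical Taylor expansion of $\ln y$ via the identity $f^{*(n)}(x)=e^{(\ln\circ f)^{(n)}(x)}$ that follows from \eqref{Jac-Mul-23-b}. Since $y$ takes values in $\mathbb{R}_e$, the composition $\ln\circ y$ is well defined, and the assumption that $y$ is infinitely multiplicatively differentiable in a neighbourhood of $x_0$ is equivalent, via this identity, to the classical assertion that $\ln y$ is infinitely (classically) differentiable in that neighbourhood. I will take this equivalence as the first and key reduction.

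Next I would invoke the classical Taylor expansion of $\ln y$ about $x_0$, namely
\begin{equation*}
(\ln y)(x)=\sum_{n=0}^{\infty}\frac{(\ln y)^{(n)}(x_0)}{n!}(x-x_0)^n,
\end{equation*}
valid on the same neighbourhood (with the implicit analyticity hypothesis that accompanies the notion of ``multiplicative power series expansion''; this is the analogue of assuming the classical Taylor series of $\ln y$ represents $\ln y$, not merely that it exists formally). Exponentiating both sides and using the continuity of $\exp$ to pass the exponential through the convergent sum, I obtain
\begin{equation*}
y(x)=\exp\!\left(\sum_{n=0}^{\infty}\frac{(\ln y)^{(n)}(x_0)}{n!}(x-x_0)^n\right)=\prod_{n=0}^{\infty}\exp\!\left(\frac{(\ln y)^{(n)}(x_0)}{n!}(x-x_0)^n\right).
\end{equation*}

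Rewriting each factor as $\bigl(e^{(\ln y)^{(n)}(x_0)/n!}\bigr)^{(x-x_0)^n}$ and recalling $y^{*(n)}(x_0)=e^{(\ln y)^{(n)}(x_0)}$, I set
\begin{equation*}
a_n:=e^{(\ln y)^{(n)}(x_0)/n!}=\bigl(y^{*(n)}(x_0)\bigr)^{1/n!}\in\mathbb{R}_e,
\end{equation*}
which yields the desired representation \eqref{Jac-Mul-61} with coefficients in $\mathbb{R}_e$. The main technical obstacle is the convergence issue I flagged above: the theorem as stated only assumes infinite multiplicative differentiability, so strictly speaking one needs either to add a real-analyticity hypothesis on $\ln y$ (the natural multiplicative analogue of classical analyticity) or to interpret the product expansion as a formal multiplicative Taylor series with remainder. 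Apart from this point, the argument is entirely a logarithmic transport of the standard Taylor expansion, and every step — differentiability, summation, and exponentiation — is justified by the rules already recorded in Proposition~\ref{Jac-Mul-36} and equation~\eqref{Jac-Mul-23-b}.
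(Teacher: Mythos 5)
The paper does not prove this statement; it is quoted verbatim from \cite[Theorem 3.2]{Yalcin-2}, so there is no internal proof to compare against. Your logarithmic-transfer argument is the standard one for this result and is correct: setting $a_n=\bigl(y^{*(n)}(x_0)\bigr)^{1/n!}=e^{(\ln y)^{(n)}(x_0)/n!}$ and exponentiating the classical Taylor series of $\ln y$ gives exactly \eqref{Jac-Mul-61}, and the convergence caveat you flag (infinite multiplicative differentiability alone does not force the series to represent the function, e.g. the multiplicative analogue of $e^{-1/x^2}$) is a genuine gap in the statement as cited, not in your argument; the theorem is implicitly used under an analyticity assumption on $\ln y$, just as you note.
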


Moreover, the set of real and positive functions $\{y_1(x),\ldots, y_n(x)\}$ is said to be \emph{multiplicative linearly independent} on an interval $\mathbf{I}$ if the only solution to the equation $\prod_{k=1}^ny_k^{c_k}(x)=1$ is $c_1=\cdots=c_n=0$, where $c_k$ are scalars, for every $x\in \mathbf{I}$. Since $\prod_{k=1}^ny_k^{c_k}(x)=1$ is equivalent to $\sum_{k=1}^nc_k\ln(y_k(x))=0$, we have the following result.
\begin{proposition}\label{Jac-Mul-101}
The set of real and positive functions $\{y_1(x),\ldots, y_n(x)\}$ is multiplicative linearly independent on the interval $\mathbf{I}$ if and only if the set of functions $\{\ln(y_1(x)),\ldots, \ln(y_n(x))\}$ is linearly independent on $\mathbf{I}$.
\end{proposition}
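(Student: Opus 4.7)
The plan is to exploit the observation already made right before the statement: applying the natural logarithm (which is well-defined on the range of each $y_k$ since all $y_k$ are real and positive) converts any multiplicative relation $\prod_{k=1}^n y_k^{c_k}(x)=1$ into the additive relation $\sum_{k=1}^n c_k \ln(y_k(x))=0$, and conversely applying the exponential converts back. Thus the two linear-independence conditions are tautologically equivalent, and the proof reduces to making this pass-through explicit in both directions.

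First I would handle the forward implication: assume $\{y_1,\ldots,y_n\}$ is multiplicative linearly independent on $\mathbf{I}$, and suppose scalars $c_1,\ldots,c_n$ satisfy $\sum_{k=1}^n c_k \ln(y_k(x))=0$ for every $x\in \mathbf{I}$. Applying $\exp$ to both sides and using $e^{c_k \ln y_k(x)}=y_k(x)^{c_k}$ (valid because $y_k(x)>0$) yields $\prod_{k=1}^n y_k(x)^{c_k}=1$ for every $x\in \mathbf{I}$, whence the hypothesis forces $c_1=\cdots=c_n=0$. Hence $\{\ln y_1,\ldots,\ln y_n\}$ is linearly independent in the classical sense.

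For the converse I would reverse the logic: assume $\{\ln y_1,\ldots, \ln y_n\}$ is linearly independent on $\mathbf{I}$, and suppose $\prod_{k=1}^n y_k(x)^{c_k}=1$ for all $x\in \mathbf{I}$. Taking logarithms of both sides (legitimate since the left-hand side is a product of positive quantities and $\ln 1 = 0$) produces $\sum_{k=1}^n c_k \ln(y_k(x))=0$ for all $x \in \mathbf{I}$, and by hypothesis this forces $c_1=\cdots=c_n=0$, proving multiplicative linear independence.

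There is no real obstacle in this argument — the only subtle point worth flagging is the positivity assumption on the $y_k$, which is what makes both $\ln y_k$ and the exponential inversion unambiguous; without positivity, neither direction would go through cleanly. I would therefore keep the write-up short and emphasize the role of positivity in ensuring that $\ln$ and $\exp$ act as mutually inverse bijections between the multiplicative and additive settings on $\mathbf{I}$.
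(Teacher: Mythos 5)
Your argument is correct and is essentially the paper's own: the paper simply notes that $\prod_{k=1}^n y_k^{c_k}(x)=1$ is equivalent to $\sum_{k=1}^n c_k\ln(y_k(x))=0$ (via $\ln$ and $\exp$, using positivity of the $y_k$) and concludes the proposition from this equivalence. Your write-up just spells out both directions of that same observation, so there is nothing to add or fix.
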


Now, we define some multiplicative algebraic structures with their algebraic operations that will be useful in the sequel. Let $\mathbf{A}$ be a non-empty subset of  $\mathbb{R}_e$. The pair $(\mathbf{A}, \oplus)$ and the triplet $(\mathbf{A}, \oplus, \odot )$ define a commutative group and ring 
\cite{Kadak}, respectively, where $\oplus$ and $\odot$ are multiplicative algebraic operations defined by 
\begin{equation*}
  \begin{split}
     \oplus :& \ \mathbf{A}\times \mathbf{A}\longmapsto \mathbf{A} \ \   \ \ \ \ \ \ \ \ \ \ \ \ \  \ \ \ \ \ \ \odot :\mathbf{A}\times \mathbf{A} \longmapsto \mathbf{A} \\
       & \  (k,s) \ \  \longmapsto k\oplus s=ks=sk,  \ \ \ \ \ \ \ \   (k,s) \ \ 
 \longmapsto k\odot  s=k^{\ln s}=s^{\ln k}.
  \end{split}
\end{equation*}
Notice that if $f,g$ are multiplicative differentiable functions at $x$, from the fourth property of Proposition \ref{Jac-Mul-36} we get
\begin{equation*}
(f\odot g^*)(x)=\left((f^*(x))^{\ln g(x)} \right)\oplus\left(f(x)^{(\ln g(x))'} \right)= \left[(f^* \odot g )(x)\right]\oplus\left(f(x) \odot e^{(\ln g(x))'}   \right),
\end{equation*}
and using \eqref{Jac-Mul-23-b} we conclude that 
\begin{equation}\label{Jac-Mul-37}
    (f\odot g)^*(x)= \left[(f^* \odot g)(x) \right]\oplus\left[(f \odot g^*)(x)   \right].
\end{equation}
Moreover, for any function $\phi(x)$ we have
\begin{equation}\label{Jac-Mul-38}
    [(f\odot g)(x)]^{\phi(x)}=(f^{\phi(x)}\odot g)(x)=(f\odot g^{\phi(x)})(x)=[(g\odot f)(x)]^{\phi(x)}.
\end{equation}

Finally, given an interval $[a,b]$, $(a<b)$, and a positive weight function $\omega$, the $\mathbf{L}^2_{*}\left([a,b],\omega\right)$ weighted multiplicative space with respect to $\omega$ is defined by
\begin{equation}\label{Jac-Mul-201}
\mathbf{L}^2_{*}([a,b],\omega) := \left\{ f : [a,b] \to \mathbb{R}_e \ : \ \int_{a}^{b} [f(x) \odot f(x)]^{\omega(x)} \, dx < \infty \right\}.
\end{equation}
This space induces the multiplicative inner product or $^*$-inner product (see \cite{Goktas-2})
\begin{equation}\label{Jac-Mul-34-b}
  \begin{split}
     \langle \cdot ,\cdot  \rangle_{*, \omega} \ : &\ \mathbf{L}^2_{*}\times  \mathbf{L}^2_{*}\longmapsto \mathbb{R}_e \\
    &\ \  (f,g) \    \longmapsto \langle f ,g \rangle_{*,\omega}=\int_{a}^b [f(x)\odot g(x)]^{\omega(x)dx}.
  \end{split}
\end{equation}
If $f=g$ in \eqref{Jac-Mul-34-b}, we will write $\|f\|_{*,\omega}^2= \langle f,f\rangle_{*,\omega}$. The pair $(\mathbf{L}^2_{*}\left([a,b],\omega\right), \langle \cdot,\cdot \rangle_{*,\omega})$ is called a multiplicative inner product (or $*$-inner product) space. The space $\langle \cdot,\cdot \rangle_{*,\omega}$ satisfies the following properties for every $f,g\in \mathbf{L}^2_{*}\left([a,b],\omega\right)$ and $k\in \mathbb{R}$
\begin{enumerate}
  \item $\langle f,f \rangle_{*,\omega} \geq 1$, with $\langle f,f \rangle_{*,\omega}=1$ if and only if $f=1$, 
  \item $\langle f\oplus g,h \rangle_{*,\omega}=\langle f,h \rangle_{*,\omega}\oplus \langle g,h \rangle_{*,\omega}$,
  \item $\langle e^{k}\odot f,g \rangle_{*,\omega}=e^{k}\odot\langle f,g \rangle_{*,\omega}$,
  \item $\langle f,g \rangle_{*,\omega}=\langle g,f \rangle_{*,\omega}$.
\end{enumerate}

Moreover, there exists a relation between $\langle \cdot,\cdot \rangle_{*,\omega}$ and the inner product $\langle \cdot,\cdot \rangle_{\omega}$ induced by the weighted Hilbert space $\mathbf{L}^2\left([a,b],\omega\right)$, as can be seen in the following result.

\begin{lemma}\label{Jac-Mul-43}
    If $f,g\in \mathbf{L}_{*}^{2}\left([a,b],\omega\right)$, then
    \begin{equation}\label{Jac-Mul-45}
        \ln \left(  \langle f,g \rangle_{*,\omega} \right)= \langle \ln f,\ln g \rangle_{\omega}.
    \end{equation}
In particular, if $f=g$, 
\begin{equation}\label{Jac-Mul-42}
        \ln \left(  \| f\|_{*,\omega}^2 \right)=\| \ln f\|_{\omega}^2.
\end{equation}
As a consequence, $f\in \mathbf{L}_{*}^{2}\left([a,b],\omega\right)$ if and only if $\ln (f)\in \mathbf{L}^{2}\left([a,b],\omega\right)$.
\end{lemma}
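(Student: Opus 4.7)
The plan is to unwind both sides using the two bridging formulas: the identity $k\odot s=e^{\ln k\,\ln s}$ for the multiplicative product, and the relation $\int_a^b h(x)^{dx}=\exp\bigl(\int_a^b\ln h(x)\,dx\bigr)$ from \eqref{Jac-Mul-29}. First I would rewrite the integrand: since $f(x)\odot g(x)=e^{\ln f(x)\cdot\ln g(x)}$, we have
\begin{equation*}
\ln\bigl[f(x)\odot g(x)\bigr]=\ln f(x)\,\ln g(x).
\end{equation*}
Next I would pull the weight $\omega(x)\,dx$ through the multiplicative integral. The natural extension of \eqref{Jac-Mul-29} is
\begin{equation*}
\int_a^b h(x)^{\omega(x)\,dx}=\exp\!\left(\int_a^b \omega(x)\ln h(x)\,dx\right),
\end{equation*}
which is justified by noting that the multiplicative integral with weight is defined as the limit of products $\prod_i h(x_i)^{\omega(x_i)\Delta x_i}$, whose logarithm is the Riemann sum for $\int_a^b\omega\ln h\,dx$.

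Putting these two observations together,
\begin{equation*}
\langle f,g\rangle_{*,\omega}=\int_a^b\bigl[f(x)\odot g(x)\bigr]^{\omega(x)\,dx}=\exp\!\left(\int_a^b \omega(x)\ln f(x)\ln g(x)\,dx\right)=\exp\bigl(\langle\ln f,\ln g\rangle_\omega\bigr),
\end{equation*}
and taking the classical logarithm yields \eqref{Jac-Mul-45}. The identity \eqref{Jac-Mul-42} is then the case $f=g$, recalling the convention $\|f\|_{*,\omega}^2=\langle f,f\rangle_{*,\omega}$ and $\|\ln f\|_\omega^2=\langle\ln f,\ln f\rangle_\omega$.

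For the final equivalence, I would argue that since $\exp$ is a strictly increasing bijection between $\mathbb{R}$ and $(0,\infty)$, the quantity $\|f\|_{*,\omega}^2=\exp\bigl(\|\ln f\|_\omega^2\bigr)$ is finite if and only if $\|\ln f\|_\omega^2<\infty$; by the definition \eqref{Jac-Mul-201} this is precisely the assertion that $f\in\mathbf{L}^2_*([a,b],\omega)$ iff $\ln f\in\mathbf{L}^2([a,b],\omega)$. The only genuine point requiring care, and thus the main (mild) obstacle, is justifying the weighted version of \eqref{Jac-Mul-29}; everything else is purely algebraic manipulation of the definitions of $\odot$ and the inner products.
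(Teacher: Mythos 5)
Your proposal is correct and follows essentially the same route as the paper: unwind $f\odot g=e^{\ln f\,\ln g}$, apply \eqref{Jac-Mul-29} to get $\langle f,g\rangle_{*,\omega}=\exp\left(\langle \ln f,\ln g\rangle_{\omega}\right)$, take logarithms, and specialize $f=g$ for \eqref{Jac-Mul-42} and the membership equivalence. The only simplification worth noting is that your ``weighted'' extension of \eqref{Jac-Mul-29} needs no Riemann-sum justification, since it is just \eqref{Jac-Mul-29} applied to the positive function $\left[f(x)\odot g(x)\right]^{\omega(x)}$, whose logarithm is $\omega(x)\ln f(x)\ln g(x)$.
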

\begin{proof}
The first equality follows from \eqref{Jac-Mul-29} and  \eqref{Jac-Mul-34-b}
\begin{equation*}
    \ln \left(  \langle f,g \rangle_{*,\omega} \right)=\ln \left( \exp \left( \int_a^b(\ln f) (\ln g)\omega(x)dx \right)\right)=\langle \ln f,\ln g \rangle_{\omega}.
\end{equation*}
and \eqref{Jac-Mul-42} follows by taking $f=g$.
\end{proof}


\section{The multiplicative Jacobi equation and its multiplicative polynomials}
In this section, the main goal is to introduce multiplicative Jacobi polynomials and deduce some or their properties. They will arise as one of the solutions of a particular case of the following multiplicative Sturm-Liouville equation
\begin{equation}\label{Jac-Mul-2}
    \frac{d^*}{dx}\left( e^{p(x)}\odot \frac{d^*y}{dx} \right)\oplus
    \left( e^{q(x)}\odot y \right)\oplus \left(e^{ \gamma \omega(x)}\odot y\right)=1, \ \ x\in[-1,1],
\end{equation}
where $\gamma$ is a spectral parameter, and $p(x)$, $q(x)$, $\omega(x)$ are real-valued continuous functions (for more properties and examples of the multiplicative Sturm-Liouville equation see \cite{Goktas-2}). 

Taking $\gamma=r(r+\alpha+\beta+1)$, $\omega(x)=(1-x)^{\alpha}(1+x)^{\beta}$ with $\alpha , \beta>-1$, $r\in \mathbb{R}$, $q(x)=0$ and $p(x)=(1-x^2)\omega(x)$, the multiplicative equation \eqref{Jac-Mul-2} becomes
\begin{equation}\label{Jac-Mul-25}
    \frac{d^*}{dx}\left( \left( y^*\right)^{(1-x^2)(1-x)^{\alpha}(1+x)^{\beta}} \right)y^{r(r+\alpha+\beta+1) (1-x)^{\alpha}(1+x)^{\beta}}=1,  \ \ x\in[-1,1],
\end{equation}
or, equivalently,
\begin{equation}\label{Jac-Mul-1}
    \left(y^{**} \right)^{1-x^2}\left(y^{*} \right)^{\beta-\alpha-(\alpha+\beta+2)x}y^{r(r+\alpha+\beta+1)}=1, \ \ x\in[-1,1].
\end{equation}
In particular, if we set $r=n$ as a non-negative integer, \eqref{Jac-Mul-1} is called an $n$-th order \emph{multiplicative Jacobi equation}. In such a case, $y(x,r=n)>0$ is a solution of \eqref{Jac-Mul-1}, and it is called a multiplicative Jacobi solution. Some special cases are:
\begin{enumerate}
      \item The \emph{multiplicative Legendre equation}, when $\alpha=\beta=0$ \cite{Goktas}.
  \item The \emph{multiplicative Chebyshev equation} of first kind, when $\alpha=\beta=-1/2$ \cite{Yalcin}.
  \item The \emph{multiplicative Chebyshev equation} of second kind, when $\alpha=\beta=1/2$ \cite{Yalcin}.
  \item The \emph{multiplicative Gegenbauer (or Ultraspherical) equation}, when $\alpha=\beta \neq-1/2$.
\end{enumerate}

Now, we present the solution of the multiplicative equation \eqref{Jac-Mul-1} as multiplicative power series \eqref{Jac-Mul-61} with $x_0=0$.  
\begin{proposition}
    Let $a_0$ and $a_1$ be arbitrary positive numbers. The general solution in multiplicative power series for the multiplicative differential equation \eqref{Jac-Mul-1} is 
    \begin{equation}\label{Jac-Mul-9}
        y(x)= a_0\prod_{k=0}^{\infty}(a_0)^{m_{k}(\lambda)x^{k+2}}(a_1)^x\prod_{k=1}^{\infty}(a_1)^{q_{k}(\lambda)x^{k+1}},
    \end{equation}
    with $\{m_{n}\}_{n\geq 0}$ and $\{q_{n}\}_{n\geq 1}$  given by 
\begin{equation}\label{Mul-Jac-4}
\begin{split}
    m_{n-1}(\lambda)&:=\frac{\mu_0}{(n+1)!}\left[\lambda^{n-1}+\lambda^{n-3}\sum_{k=2}^{n-1}\mu_k+\lambda^{n-5}\sum_{k=2}^{n-3}\left(\sum_{j=2}^{k}\mu_j\right)\mu_{k+2}+\cdots \right], \ \ n\geq 1,\\
    q_n(\lambda)&:=\frac{1}{(n+1)!}\left[\lambda^{n}+\lambda^{n-2}\sum_{k=1}^{n-1}\mu_k+\lambda^{n-4}\sum_{k=1}^{n-3}\left(\sum_{j=1}^{k}\mu_j\right)\mu_{k+2}+\cdots \right], \ \ n\geq 1,
\end{split}
\end{equation}
and
\begin{equation}\label{Mul-Jac-9-b}
\begin{split}
    m_{n-1}(0)&= \left\{ \begin{array}{lcc}  \frac{1}{(n+1)!}\prod_{k=0}^{(n-1)/2}\mu_{2k}, & if & n \mbox{ is odd}, \\ \\ 0, & if & n \mbox{ is even}, \\  \end{array} \right.\\
    q_n(0)&= \left\{ \begin{array}{lcc}  \frac{1}{(n+1)!}\prod_{k=1}^{n/2}\mu_{2k-1}, & if & n \mbox{ is even}, \\ \\ 0, & if & n \mbox{ is odd}, \\  \end{array} \right.
\end{split}
\end{equation}
where $\lambda:=\alpha-\beta$ and $\mu_n \equiv \mu_n(n,\alpha,\beta,r):=(n-r)(r+\alpha+\beta+n+1)$, for all $n\geq 0$. 

\end{proposition}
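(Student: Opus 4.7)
The plan is to turn the multiplicative equation \eqref{Jac-Mul-1} into the classical Jacobi differential equation by taking logarithms, then expand as an ordinary power series and translate the result back. Since any solution $y$ is required to be positive, set $u(x):=\ln y(x)$; the identifications $\ln y^*(x)=u'(x)$ and $\ln y^{**}(x)=u''(x)$ follow from applying \eqref{Jac-Mul-23-b} twice. Taking $\ln$ on both sides of \eqref{Jac-Mul-1} yields
\begin{equation*}
(1-x^2)u''(x)+[\beta-\alpha-(\alpha+\beta+2)x]u'(x)+r(r+\alpha+\beta+1)u(x)=0,
\end{equation*}
which is exactly the classical Jacobi equation with parameters $\alpha,\beta,r$.

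By Theorem \ref{Jac-Mul-5} applied to $y$, we may write $y(x)=\prod_{n\geq 0}(a_n)^{x^n}$ and hence $u(x)=\sum_{n\geq 0} c_n x^n$ with $c_n=\ln a_n$. Substituting this series into the classical Jacobi equation, shifting indices, and equating the coefficient of $x^n$ to zero produces, after routine simplification (noting that $r(r+\alpha+\beta+1)-n(n+\alpha+\beta+1)=-\mu_n$), the two-term recurrence
\begin{equation*}
(n+2)(n+1)c_{n+2}=\lambda(n+1)c_{n+1}+\mu_n c_n,\qquad n\geq 0,
\end{equation*}
with $c_0$ and $c_1$ free. By linearity, $c_n=c_0 A_n+c_1 B_n$ for $n\geq 2$, where the sequences $A_n,B_n$ obey the same recurrence starting from $(A_0,A_1)=(1,0)$ and $(B_0,B_1)=(0,1)$, respectively. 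Comparing coefficients of $x^n$ in \eqref{Jac-Mul-9} after taking $\ln$ shows that the proposition is equivalent to proving $A_n=m_{n-2}(\lambda)$ and $B_n=q_{n-1}(\lambda)$ for $n\geq 2$.

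I would establish these identities by induction on $n$, reading \eqref{Mul-Jac-4} as a sum over descent patterns from index $n+1$ down to $0$ (respectively $1$), in which a $\lambda$-step lowers the index by one and a $\mu_k$-step lowers it by two. Each iteration of the recurrence introduces a new $\lambda$-factor on one branch and a new $\mu_n$-factor on the other, and the denominators $1/(k+1)(k+2)$ and $1/(k+2)$ accumulated along each path telescope to $1/(n+1)!$. Grouping paths by the number of $\mu$-steps yields exactly the nested sums in \eqref{Mul-Jac-4}. The special values \eqref{Mul-Jac-9-b} at $\lambda=0$ then follow because the recurrence decouples into $(n+2)(n+1)c_{n+2}=\mu_n c_n$, whose iteration starting from $c_0$ (resp.\ $c_1$) produces the displayed product over even (resp.\ odd) indices and forces the vanishing at the opposite parity.

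The principal difficulty lies in this combinatorial bookkeeping: identifying the precise nested sums produced by iterating the two-term recurrence with those written in \eqref{Mul-Jac-4}, and confirming that the global factors $\mu_0/(n+1)!$ and $1/(n+1)!$ emerge correctly. I would handle this by splitting each candidate formula at its leading $\lambda$-power and leading $\mu$-factor and matching the two pieces against the right-hand side of the recurrence, which closes the inductive step. Once the coefficient identities are established, exponentiating $u(x)=c_0+c_1 x+\sum_{n\geq 2}(c_0 m_{n-2}(\lambda)+c_1 q_{n-1}(\lambda))x^n$ and collecting powers of $x$ recovers \eqref{Jac-Mul-9}.
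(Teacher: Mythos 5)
Your proposal is correct, and its skeleton coincides with the paper's: a power-series ansatz centered at $x_0=0$ justified by Theorem \ref{Jac-Mul-5}, leading to the recurrence $(n+2)(n+1)c_{n+2}=\lambda(n+1)c_{n+1}+\mu_n c_n$ on the exponents, which is exactly the logarithm of the paper's multiplicative relation $a_{n+2}=(a_{n+1})^{\lambda/(n+2)}(a_n)^{\mu_n/((n+2)(n+1))}$, followed by iteration to express everything through $a_0$ and $a_1$. The genuine difference is one of framing and of rigor at the final step. The paper stays inside multiplicative calculus, substituting $y(x)=\prod_n (a_n)^{x^n}$ directly into \eqref{Jac-Mul-1} and manipulating exponents, and then obtains the nested-sum formulas \eqref{Mul-Jac-4} by computing $a_2,\dots,a_{2n}$ and $a_3,\dots,a_{2n+1}$ explicitly and reading off the pattern; you instead pass immediately to $u=\ln y$, recognize the classical Jacobi equation, and work with an ordinary Frobenius-type expansion, which makes the linear structure $c_n=c_0A_n+c_1B_n$ transparent. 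Your path-counting interpretation ($\lambda$-steps lowering the index by one, $\mu_k$-steps by two, denominators telescoping to $1/(n+1)!$, the forced terminal $\mu_0$-step for paths reaching index $0$) together with the proposed induction supplies a proof of the identification $A_{n+1}=m_{n-1}(\lambda)$, $B_{n+1}=q_n(\lambda)$ that the paper only indicates through displayed low-order cases and ellipses, so your route is arguably tighter on that point; what the paper's intrinsic multiplicative computation buys is that it exhibits the solution machinery of multiplicative power series itself, which is part of the article's purpose. Your $\lambda=0$ argument (the recurrence decoupling into $(n+2)(n+1)c_{n+2}=\mu_n c_n$) matches \eqref{Mul-Jac-9-b} and agrees with the paper's treatment.
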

\begin{proof}
Assume that the solution of \eqref{Jac-Mul-1} is infinitely multiplicative differentiable at a neighbourhood of $x_0=0$, so Theorem \ref{Jac-Mul-5} guarantees that \eqref{Jac-Mul-1} has a solution in multiplicative power series in the form \eqref{Jac-Mul-61}, i.e.,
\begin{equation}\label{Jac-Mul-6}
    y(x)=\prod_{n=0}^{\infty} (a_n)^{x^n}, \ \ (a_n\in \mathbb{R}_e).
\end{equation}
In fact, since $x_0=0$ is a multiplicative ordinary point from \eqref{Jac-Mul-1}, then from \cite[Theorem 3.1]{Yalcin-2}, there exists a multiplicative power series solution as in \eqref{Jac-Mul-6}. Taking the multiplicative derivatives in \eqref{Jac-Mul-6} and substituting in \eqref{Jac-Mul-1} we get
\begin{equation*}
\begin{split}
    1=&\left[ \prod_{n=2}^{\infty} (a_n)^{n(n-1)x^{n-2}} \right]^{1-x^2}
    \left[ \prod_{n=1}^{\infty} (a_n)^{nx^{n-1}} \right]^{\beta-\alpha-(\alpha+\beta+2)x} 
    \left[ \prod_{n=0}^{\infty} (a_n)^{x^{n}} \right]^{r(r+\alpha+\beta+1)}\\
    =&(a_1)^{\beta-\alpha-(\alpha+\beta+2)x)}
    (a_0)^{r(r+\alpha+\beta+1)}(a_1)^{r(r+\alpha+\beta+1)x}\\
    &\prod_{n=2}^{\infty} (a_n)^{n(n-1)x^{n-2}+n(\beta-\alpha)x^{n-1}
+[n(1-n)-n(\alpha+\beta+2)+r(r+\alpha+\beta+1)]x^n}\\
=&\left[(a_2)^2(a_1)^{\beta-\alpha}(a_0)^{r(r+\alpha+\beta+1)}\right]
\left[(a_3)^6(a_2)^{2(\beta-\alpha)}(a_1)^{r(r+\alpha+\beta+1)-(\alpha+\beta+2)}\right]^x
\\
&\prod_{n=2}^{\infty}\left[ (a_{n+2})^{(n+2)(n+1)}(a_{n+1})^{(n+1)(\beta-\alpha)}(a_n)^{
-n(n+\alpha+\beta+1)+r(r+\alpha+\beta+1)} \right]^{x^n}.
\end{split}
\end{equation*}
Since $r(r+\alpha+\beta+1)-(\alpha+\beta+2)=(r-1)(r+\alpha+\beta+2)$ and $-n(n+\alpha+\beta+1)+r(r+\alpha+\beta+1)=-(n-r)(r+\alpha+\beta+n+1)$, we get
\begin{equation*}
    \begin{split}
        1=& (a_2)^2(a_1)^{\beta-\alpha}(a_0)^{r(r+\alpha+\beta+1)},\\
        1=&(a_3)^6(a_2)^{2(\beta-\alpha)}(a_1)^{(r-1)(r+\alpha+\beta+2)},\\
        1=&(a_{n+2})^{(n+2)(n+1)}(a_{n+1})^{(n+1)(\beta-\alpha)}(a_n)^{-(n-r)(r+\alpha+\beta+n+1)},  \ \ n\geq 2.
    \end{split}
\end{equation*}
For $a_0$, $a_1$ arbitrary positive numbers and solving and substituting with $\lambda=\alpha-\beta$ and $\mu_n=(n-r)(r+\alpha+\beta+n+1)$, we obtain the following recurrence relation
\begin{equation}\label{Mul-Jac-2}
    \begin{split}
a_2&=(a_1)^{\frac{1}{2!}\lambda}(a_0)^{\frac{1}{2!}\mu_0},\\
a_3&=(a_1)^{\frac{1}{3!}[\lambda^2+\mu_1]}(a_0)^{\frac{1}{3!}\mu_0\lambda},\\
a_{n+2}&=(a_{n+1})^{\frac{1}{n+2}\lambda}(a_n)^{\frac{1}{(n+2)(n+1)}\mu_n}, \ \ n\geq 2.
    \end{split}
\end{equation}
Using this recurrence relation, we can write all the elements of the sequence in terms of $a_0$ and $a_1$. Indeed, for the even powers we have
\begin{equation*}
    \begin{split}
a_2&=(a_1)^{\frac{\lambda}{2!}}(a_0)^{\frac{\mu_0}{2!}},\\
a_{4}&=(a_1)^{\frac{\lambda}{4!}[\lambda^2+(\mu_1+\mu_2)]}(a_0)^{\frac{\mu_0}{4!}[\lambda^2+\mu_2]},\\
a_{6}&=(a_1)^{\frac{\lambda}{6!}\left[\lambda^4+\lambda^2(\mu_1+\mu_2+\mu_3+\mu_4)+(\mu_1\mu_3+(\mu_1+\mu_2)\mu_4)\right]}(a_0)^{\frac{\mu_0}{6!}\left[\lambda^4+\lambda^2(\mu_2+\mu_3+\mu_4)+\mu_2\mu_4\right]},\\
 & \ \  \vdots\\
a_{2n}&=(a_1)^{\frac{\lambda}{(2n)!}\left[\lambda^{2n-2}+\lambda^{2n-4}\sum_{k=1}^{2n-2}\mu_k+\lambda^{2n-6}\sum_{k=1}^{2n-4}\left(\sum_{j=1}^{k}\mu_j\right)\mu_{k+2}+\cdots \right]}\\
&\ \ \ \ (a_0)^{\frac{\mu_0}{(2n)!}\left[\lambda^{2n-2}+\lambda^{2n-4}\sum_{k=2}^{2n-2}\mu_k+\lambda^{2n-6}\sum_{k=2}^{2n-4}\left(\sum_{j=2}^{k}\mu_j\right)\mu_{k+2}+\cdots +\prod_{k=1}^{n-1}\mu_{2k}\right]}.
    \end{split}
\end{equation*}
In similar way, for the odd powers we get
\begin{equation*}
    \begin{split}
a_3&=(a_1)^{\frac{1}{3!}[\lambda^2+\mu_1]}(a_0)^{\frac{ \mu_0}{3!}\lambda},\\
a_{5}&=(a_1)^{\frac{1}{5!}[\lambda^4+\lambda^2(\mu_1+\mu_2+\mu_3)+\mu_1\mu_3]}(a_0)^{\frac{\lambda\mu_0}{5!}[\lambda^2+(\mu_2+\mu_3)]},\\
a_{7}&=(a_1)^{\frac{1}{7!}[\lambda^6+\lambda^4(\mu_1+\mu_2+\mu_3+\mu_4+\mu_5)+\lambda^2(\mu_1\mu_3+(\mu_1+\mu_2)\mu_4+(\mu_1+\mu_2+\mu_3)\mu_5)+\mu_1\mu_3\mu_5]}\\
&\ \ \ \ (a_0)^{\frac{\lambda\mu_0}{7!}[\lambda^4+\lambda^2(\mu_2+\mu_3+\mu_4+\mu_5)+(\mu_2\mu_4+(\mu_2+\mu_3)\mu_5)]},\\
 & \ \  \vdots\\
a_{2n+1}&=(a_1)^{\frac{1}{(2n+1)!}\left[\lambda^{2n}+\lambda^{2n-2}\sum_{k=1}^{2n-1}\mu_k+\lambda^{2n-4}\sum_{k=1}^{2n-3}\left(\sum_{j=1}^{k}\mu_j\right)\mu_{k+2}+\cdots +\prod_{k=1}^{n}\mu_{2k-1}\right]}\\
&\ \ \ \ (a_0)^{\frac{\lambda\mu_0}{(2n+1)!}\left[\lambda^{2n-2}+\lambda^{2n-4}\sum_{k=2}^{2n-1}\mu_k+\lambda^{2n-6}\sum_{k=2}^{2n-3}\left(\sum_{j=2}^{k}\mu_j\right)\mu_{k+2}+\cdots \right]}.
    \end{split}
\end{equation*}
In general, the power of $a_{n+1}$ is a polynomial of degree $n$ in the variable $\lambda$. Taking into account \eqref{Mul-Jac-4}, if we denote such polynomial by $p_n(\lambda)$, then we have
\begin{equation}\label{Jac-Mul-20}
    p_n(\lambda):=q_n(\lambda)+m_{n-1}(\lambda), \ \ n\geq 1.
\end{equation}
From the powers of $a_{2n}$ and $a_{2n+1}$, it follows that
\begin{equation*}
    p_n(0)= \left\{ \begin{array}{lcc}   q_n(0)=\frac{1}{(n+1)!}\prod_{k=1}^{n/2}\mu_{2k-1}, & if & n \mbox{ is even}, \\ \\ m_{n-1}(0)=\frac{1}{(n+1)!}\prod_{k=0}^{(n-1)/2}\mu_{2k}, & if & n \mbox{ is odd}. \\  \end{array} \right.
\end{equation*}
Besides, $m_{n-1}(0)=0$ if $n$ is even and $q_{n}(0)=0$ if $n$ is odd, so we get \eqref{Mul-Jac-9-b}. With this notation, we have 
\begin{equation}\label{Jac-Mul-21}
a_{n+1}=a_{1}^{q_{n}(\lambda)}a_0^{m_{n-1}(\lambda)}, \ \     n\geq 1.
\end{equation}
Substituting $a_{n+1}$ in \eqref{Jac-Mul-6} and rearranging we get \eqref{Jac-Mul-9}.
\end{proof}
The sequence of polynomials $\{p_n(\lambda)\}_{n\geq 1}$ in \eqref{Jac-Mul-20} has the following interesting property. 
\begin{proposition}\label{Mul-Jac-6}
    The sequence $\{p_n(\lambda) \}_{n\geq 1}$ satisfies the following recurrence relation
    \begin{equation*}
    \begin{split}
        p_1(\lambda)&=\frac{1}{2!}(\lambda+\mu_0),\\
        p_2(\lambda)&=\frac{1}{3!}(\lambda^2+\mu_0\lambda+\mu_1),\\
        p_{n+1}(\lambda)&=\frac{1}{n+2}\lambda p_n(\lambda)+\frac{\mu_n}{(n+2)(n+1)}p_{n-1}(\lambda), \ \ n\geq 2.
    \end{split}
    \end{equation*}
\end{proposition}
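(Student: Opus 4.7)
The plan is to extract the recurrence for $p_n(\lambda)$ directly from the coefficient recurrence \eqref{Mul-Jac-2} by matching exponents of $a_0$ and $a_1$. Recall that \eqref{Jac-Mul-21} expresses every coefficient in the form $a_{n+1}=a_1^{q_n(\lambda)}a_0^{m_{n-1}(\lambda)}$, where $a_0,a_1$ are arbitrary positive numbers, and the third line of \eqref{Mul-Jac-2} states
\begin{equation*}
a_{n+2}=a_{n+1}^{\lambda/(n+2)}\,a_n^{\mu_n/((n+2)(n+1))},\qquad n\geq 2.
\end{equation*}

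First I would handle the base cases $p_1(\lambda)$ and $p_2(\lambda)$. Reading off $q_1,m_0$ from the first line of \eqref{Mul-Jac-2} gives $p_1(\lambda)=q_1(\lambda)+m_0(\lambda)=\tfrac{1}{2!}(\lambda+\mu_0)$, and reading off $q_2,m_1$ from the second line gives $p_2(\lambda)=q_2(\lambda)+m_1(\lambda)=\tfrac{1}{3!}(\lambda^2+\mu_0\lambda+\mu_1)$, exactly as claimed.

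For the inductive step, I would substitute $a_{n+1}=a_1^{q_n(\lambda)}a_0^{m_{n-1}(\lambda)}$ and $a_n=a_1^{q_{n-1}(\lambda)}a_0^{m_{n-2}(\lambda)}$ into the recurrence above, obtaining
\begin{equation*}
a_{n+2}=a_1^{\frac{\lambda}{n+2}q_n(\lambda)+\frac{\mu_n}{(n+2)(n+1)}q_{n-1}(\lambda)}\,a_0^{\frac{\lambda}{n+2}m_{n-1}(\lambda)+\frac{\mu_n}{(n+2)(n+1)}m_{n-2}(\lambda)}.
\end{equation*}
On the other hand, by definition $a_{n+2}=a_1^{q_{n+1}(\lambda)}a_0^{m_n(\lambda)}$. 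Since $a_0$ and $a_1$ are arbitrary and independent, the set $\{\ln a_0,\ln a_1\}$ is linearly independent (as a consequence of Proposition \ref{Jac-Mul-101} applied to the positive functions $a_0,a_1$ viewed as constants), so matching exponents of $a_1$ and $a_0$ separately yields the pair of scalar recurrences
\begin{align*}
q_{n+1}(\lambda)&=\tfrac{\lambda}{n+2}q_n(\lambda)+\tfrac{\mu_n}{(n+2)(n+1)}q_{n-1}(\lambda),\\
m_n(\lambda)&=\tfrac{\lambda}{n+2}m_{n-1}(\lambda)+\tfrac{\mu_n}{(n+2)(n+1)}m_{n-2}(\lambda),
\end{align*}
for $n\geq 2$. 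Adding the two lines and using $p_n(\lambda)=q_n(\lambda)+m_{n-1}(\lambda)$ from \eqref{Jac-Mul-20} delivers the desired identity
\begin{equation*}
p_{n+1}(\lambda)=\tfrac{\lambda}{n+2}p_n(\lambda)+\tfrac{\mu_n}{(n+2)(n+1)}p_{n-1}(\lambda).
\end{equation*}

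The only subtle point, and the one I would be most careful about, is the index bookkeeping: the recurrence \eqref{Mul-Jac-2} holds for $n\geq 2$, and the representation \eqref{Jac-Mul-21} applies for $n\geq 1$, so I need the starting values $p_1,p_2$ checked by hand (as above) to launch the induction at $n=2$, and I must confirm that $m_{n-2}$ and $q_{n-1}$ are already defined for $n=2$ (they correspond to $m_0=\mu_0/2!$ and $q_1=\lambda/2!$, which are). No genuine obstacle remains beyond this indexing check.
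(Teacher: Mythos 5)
Your proposal is correct and follows essentially the same route as the paper: both read off the base cases $p_1,p_2$ from the first two lines of \eqref{Mul-Jac-2}, substitute the representation \eqref{Jac-Mul-21} into the third line, compare exponents of $a_0$ and $a_1$ with \eqref{Mul-Jac-3} to obtain the separate recurrences for $q_{n+1}$ and $m_n$, and add them via \eqref{Jac-Mul-20}. Your extra remark justifying the exponent matching by the arbitrariness of $a_0,a_1$ is a minor point the paper leaves implicit, but it is not a different argument.
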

\begin{proof}
Notice that from \eqref{Jac-Mul-21} we have
\begin{equation}\label{Mul-Jac-3}
    a_{n+2}=a_1^{q_{n+1}(\lambda)}a_0^{m_{n}(\lambda)}, \ \ n\geq 0.
\end{equation}
On the other hand, using the two first equations from \eqref{Mul-Jac-2} and comparing with \eqref{Mul-Jac-3} we get the expressions for $p_1(\lambda)$ and $p_2(\lambda)$. In a similar way, using the third equation from \eqref{Mul-Jac-2} we obtain
\begin{equation*}
    \begin{split}
a_{n+2}&=\left((a_{1})^{q_n(\lambda)}(a_{0})^{m_{n-1}(\lambda)}\right)^{\frac{1}{n+2}\lambda}\left( (a_{1})^{q_{n-1}(\lambda)}(a_{0})^{m_{n-2}(\lambda)}\right)^{\frac{\mu_n}{(n+2)(n+1)}}\\
&= (a_1)^{\frac{1}{n+2}\lambda q_n(\lambda)+\frac{\mu_n}{(n+2)(n+1)} q_{n-1}(\lambda)}(a_0)^{\frac{1}{n+2}\lambda m_{n-1}(\lambda)+\frac{\mu_n}{(n+2)(n+1)} m_{n-2}(\lambda)}
, \ \ n\geq 2,
    \end{split}
\end{equation*}
and comparing with \eqref{Mul-Jac-3}, we get 
\begin{equation*}
\begin{split}
    q_{n+1}(\lambda)&=\frac{1}{n+2}\lambda q_n(\lambda)+\frac{\mu_n}{(n+2)(n+1)} q_{n-1}(\lambda), \\ \  m_n(\lambda)&=\frac{1}{n+2}\lambda m_{n-1}(\lambda)+\frac{\mu_n}{(n+2)(n+1)} m_{n-2}(\lambda).
\end{split}
\end{equation*}
  Since $p_{n+1}(\lambda)=q_{n+1}(\lambda)+m_{n}(\lambda)$, the result is immediate. 
\end{proof}
Notice that the previous result allows the efficient computation of $q_{n}(\lambda)$ and $m_{n-1}(\lambda)$. Moreover, the sequence $\{p_n\}_{n\geq 1}$, with the initial condition $p_0(\lambda)=1$, is actually an orthogonal sequence, as the next corollary shows. 
\begin{corollary}
If $p_0(\lambda)=1$ and $\mu_n\neq 0$ (equiv. $r\neq n$ or $r\neq-(\alpha+\beta+n+1)$) for every non-negative integer $n$, then $\{p_n\}_{n\geq 0}$ is a sequence of orthogonal polynomials with respect to a regular functional. Furthermore, if $\mu_n > 0$ for every non-negative integer $n$, then the regular functional is positive definite. Moreover, the following three-term recurrence relation holds
\begin{equation*}
    \begin{split}
    p_0(\lambda)&=1,\\
    p_1(\lambda)&=\frac{1}{2!}(\lambda+\mu_0),\\
        p_{n+1}(\lambda)&=\frac{1}{n+2}\lambda p_n(\lambda)+\frac{\mu_n}{(n+2)(n+1)}p_{n-1}(\lambda), \ \ n\geq 1.
    \end{split}
    \end{equation*}
\end{corollary}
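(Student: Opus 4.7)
The plan is to view $\{p_n\}_{n\geq 0}$ as the polynomial sequence produced by a three-term recurrence and to invoke the classical Favard theorem (see \cite[Chapter I]{Chihara}), which characterizes exactly when such a sequence is orthogonal with respect to a regular, respectively positive definite, moment functional.

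First I would check that the recurrence claimed in the corollary is consistent with the formulas of Proposition \ref{Mul-Jac-6}: with the new initial datum $p_0(\lambda):=1$, specializing $p_{n+1}(\lambda)=\tfrac{\lambda}{n+2}p_n(\lambda)+\tfrac{\mu_n}{(n+2)(n+1)}p_{n-1}(\lambda)$ at $n=1$ yields $\tfrac{\lambda}{3}p_1(\lambda)+\tfrac{\mu_1}{6}p_0(\lambda)=\tfrac{1}{3!}(\lambda^2+\mu_0\lambda+\mu_1)$, which is exactly $p_2(\lambda)$; for $n\geq 2$ the recurrence was already established in Proposition \ref{Mul-Jac-6}. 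A straightforward induction on $n$ then shows that $\deg p_n=n$ with positive leading coefficient $k_n=1/(n+1)!$, so the associated monic polynomials $P_n:=(n+1)!\,p_n$ are well defined and satisfy
\begin{equation*}
    P_{n+1}(\lambda)=\lambda\,P_n(\lambda)+\mu_n\,P_{n-1}(\lambda),\qquad n\geq 1,\quad P_0(\lambda)=1,\ P_1(\lambda)=\lambda+\mu_0.
\end{equation*}
This is precisely the canonical three-term recurrence to which Favard's theorem applies.

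Favard's theorem then delivers both conclusions. The hypothesis $\mu_n\neq 0$ for every non-negative integer $n$ ensures that none of the recurrence coefficients in front of $P_{n-1}$ vanish, so there exists a (unique up to normalization) regular linear functional $\mathbf{u}$ on the space of polynomials with respect to which $\{P_n\}$, and hence $\{p_n\}$ by nonzero scalar rescaling, is an orthogonal polynomial sequence. Under the stronger hypothesis $\mu_n>0$, the sign/reality clause of Favard's theorem yields positive definiteness of $\mathbf{u}$. The main obstacle I foresee is purely bookkeeping: one must carefully match the $+$ sign that appears in front of the $p_{n-1}$ term in the paper's recurrence with the $-$ sign used in the canonical Favard form, and keep the initial data $P_0$, $P_1$ correctly aligned with Chihara's convention $P_{-1}:=0$. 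Once this dictionary is fixed, both statements of the corollary are immediate consequences of Favard's theorem.
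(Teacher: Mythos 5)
Your proposal takes essentially the same route as the paper, whose proof consists of invoking the recurrence of Proposition \ref{Mul-Jac-6} with $p_0(\lambda)=1$ together with Favard's theorem from Chihara; your monic rescaling $P_n=(n+1)!\,p_n$ and the check of the $n=1$ case are exactly the details the paper leaves implicit. The one caveat is the sign bookkeeping you defer: in the canonical Favard form $P_{n+1}(\lambda)=(\lambda-c_{n})P_n(\lambda)-\lambda_{n}P_{n-1}(\lambda)$ the coefficient multiplying $P_{n-1}$ here is $-\mu_n$, so the positive-definiteness clause literally requires $-\mu_n>0$ rather than $\mu_n>0$ --- a point the paper's one-line proof glosses over in exactly the same way, so your argument is no less complete than the original, but you should resolve that dictionary explicitly rather than assert it.
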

\begin{proof}
    The result follows at once from Proposition \ref{Mul-Jac-6} with $p_0(\lambda)=1$ and the Favard's theorem (see \cite[Theorem 4.4]{Chihara}).
\end{proof}

 In particular, if $\lambda=\alpha-\beta=0$, taking into account \eqref{Jac-Mul-9} and \eqref{Mul-Jac-9-b} we have the following result.
 
\begin{corollary}
    Let $a_0$ and $a_1$ be arbitrary positive numbers. The general solution in multiplicative power series of the multiplicative equation \eqref{Jac-Mul-1} with $\lambda=\alpha-\beta=0$ is
\begin{equation}\label{Jac-Mul-22}
        y(x)=a_0\prod_{k=1}^{\infty}(a_0)^{\prod_{j=0}^{k-1} (2j-r)(2j+r+2\alpha+1)\frac{x^{2k}}{(2k)!}}(a_1)^x\prod_{k=1}^{\infty}(a_1)^{\prod_{j=0}^{k-1} (2j+1-r)(2j+r+2\alpha+2)\frac{x^{2k+1}}{(2k+1)!}}.
    \end{equation}
\end{corollary}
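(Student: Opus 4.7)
The plan is to obtain \eqref{Jac-Mul-22} as a direct specialization of the general multiplicative power series solution \eqref{Jac-Mul-9} already established in the previous Proposition, by setting $\lambda=\alpha-\beta=0$. Since the corollary hypothesis forces $\alpha=\beta$, the quantity $\mu_n=(n-r)(r+\alpha+\beta+n+1)$ simplifies to $\mu_n=(n-r)(n+r+2\alpha+1)$, which will match the factors that appear inside the products in \eqref{Jac-Mul-22}. So the whole job is to insert $\lambda=0$ into the expression for $y(x)$ given in \eqref{Jac-Mul-9} and carry out the resulting simplification and re-indexing.

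The first step is to read off from \eqref{Mul-Jac-9-b} which exponents survive. When $\lambda=0$, $m_{k}(0)=0$ whenever $k$ is odd (equivalently, $k+1$ is even), and $q_k(0)=0$ whenever $k$ is odd. Thus in the product $\prod_{k=0}^{\infty}(a_0)^{m_k(0)x^{k+2}}$ only indices $k=2(\ell-1)$ with $\ell\ge 1$ contribute, yielding exponents of $x^{2\ell}$; in the product $\prod_{k=1}^{\infty}(a_1)^{q_k(0)x^{k+1}}$ only indices $k=2\ell$ with $\ell\ge 1$ contribute, yielding exponents of $x^{2\ell+1}$.

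Next I would substitute the explicit values from \eqref{Mul-Jac-9-b}. For the even $a_0$-terms this gives, after setting $n=2\ell-1$ in the first line of \eqref{Mul-Jac-9-b},
\begin{equation*}
m_{2(\ell-1)}(0)\;=\;\frac{1}{(2\ell)!}\prod_{j=0}^{\ell-1}\mu_{2j},
\end{equation*}
and for the $a_1$-terms, setting $n=2\ell$ in the second line and shifting the product index by one,
\begin{equation*}
q_{2\ell}(0)\;=\;\frac{1}{(2\ell+1)!}\prod_{j=1}^{\ell}\mu_{2j-1}\;=\;\frac{1}{(2\ell+1)!}\prod_{j=0}^{\ell-1}\mu_{2j+1}.
\end{equation*}
Finally, substituting the specialised $\mu$'s, namely $\mu_{2j}=(2j-r)(2j+r+2\alpha+1)$ and $\mu_{2j+1}=(2j+1-r)(2j+r+2\alpha+2)$, and renaming $\ell$ back to $k$, produces the two displayed products in \eqref{Jac-Mul-22}. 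The prefactors $a_0$ and $(a_1)^x$ are just the $k=0$ and $k=1$ terms of the original series \eqref{Jac-Mul-6}, which are untouched by the specialisation.

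I do not anticipate any substantive obstacle: the statement is a pure bookkeeping consequence of the Proposition. The only place where one has to be careful is the simultaneous reindexing of the product bounds and of the exponents of $x$, so that the surviving even-index terms of \eqref{Mul-Jac-9-b} line up exactly with $x^{2k}$ and $x^{2k+1}$ for $k\ge 1$ as written in \eqref{Jac-Mul-22}. Once that alignment is performed, the identification is immediate.
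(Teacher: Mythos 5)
Your proposal is correct and follows exactly the route the paper intends: the corollary is stated as an immediate specialization of \eqref{Jac-Mul-9} with $\lambda=0$ via \eqref{Mul-Jac-9-b}, and your reindexing ($m_{2(k-1)}(0)=\tfrac{1}{(2k)!}\prod_{j=0}^{k-1}\mu_{2j}$, $q_{2k}(0)=\tfrac{1}{(2k+1)!}\prod_{j=0}^{k-1}\mu_{2j+1}$ with $\mu_n=(n-r)(n+r+2\alpha+1)$) checks out against \eqref{Jac-Mul-22}. No gaps.
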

From the previous corollary we have the following particular cases when $r=n$ is a non-negative integer:
\begin{enumerate}
  \item For $\alpha=\beta=0$, 
  \begin{equation*}
        y(x)=a_0\prod_{k=1}^{\infty}(a_0)^{\prod_{j=0}^{k-1} \left[2j(2j+1)-r(r+1)\right]\frac{x^{2k}}{(2k)!}}(a_1)^x\prod_{k=1}^{\infty}(a_1)^{\prod_{j=0}^{k-1} \left[2(j+1)(2j+1)-r(r+1)\right]\frac{x^{2k+1}}{(2k+1)!}},
    \end{equation*}
we get the solution from the multiplicative Legendre equation. 
  \item For $\alpha=\beta=-\frac{1}{2}$, 
  \begin{equation*}
        y(x)=a_0\prod_{k=1}^{\infty}(a_0)^{\prod_{j=0}^{k-1} \left[(2j)^2-r^2\right]\frac{x^{2k}}{(2k)!}}(a_1)^x\prod_{k=1}^{\infty}(a_1)^{\prod_{j=0}^{k-1} \left[(2j+1)^2-r^2\right]\frac{x^{2k+1}}{(2k+1)!}},
    \end{equation*}
we obtain the solution from the multiplicative Chebyshev equation of the first kind. 
  \item For $\alpha=\beta=\frac{1}{2}$,
\begin{equation*}
        y(x)=a_0\prod_{k=1}^{\infty}(a_0)^{\prod_{j=0}^{k-1} \left[4j(j+1)-r(r+2)\right]\frac{x^{2k}}{(2k)!}}(a_1)^x\prod_{k=1}^{\infty}(a_1)^{\prod_{j=0}^{k-1} \left[(2j+1)(2j+3)-r(r+2)\right]\frac{x^{2k+1}}{(2k+1)!}},
    \end{equation*}
we have the solution from the multiplicative Chebyshev equation of the second kind. 
\end{enumerate}
Notice that these particular cases coincide with those studied in \cite{Goktas,Yalcin}. As a consequence, \eqref{Jac-Mul-9} and \eqref{Jac-Mul-22} constitute a generalization of the previous contributions. On the other hand, notice that if $r=n$ is a non-negative integer, one of the infinite products appearing in \eqref{Jac-Mul-22}, which we will denote by
\begin{equation*}
\begin{split}
    F_n(x)&=a_0\prod_{k=1}^{\infty}(a_0)^{\prod_{j=0}^{k-1} (2j-n)(2j+n+2\alpha+1)\frac{x^{2k}}{(2k)!}}, \\ G_n(x)&=(a_1)^x\prod_{k=1}^{\infty}(a_1)^{\prod_{j=0}^{k-1} (2j+1-n)(2j+n+2\alpha+2)\frac{x^{2k+1}}{(2k+1)!}},
\end{split}
\end{equation*}
has a finite number of terms. Indeed, if $n$ is an even integer then $F_n(x)$ has a finite number of terms, and if $n$ is an odd integer then $G_n(x)$ has a finite number of terms. As a consequence, the power appearing on either $F_n(x)$ or $G_n(x)$ is a polynomials in the variable $x$, i.e., one of these products is a multiplicative polynomial. This result is valid not only when $\alpha=\beta$, but also when $\alpha\neq\beta$, as the next result shows.

\begin{theorem}\label{Jac-Mul-100}
Consider the multiplicative equation 
    \begin{equation}\label{Jac-Mul-8}
    \left(y^{**} \right)^{1-x^2}\left(y^{*} \right)^{\beta-\alpha-(\alpha+\beta+2)x}y^{\gamma}=1, \ \ x\in[-1,1],
\end{equation}
where $\gamma \in \mathbb{R}$ is a spectral parameter. The equation \eqref{Jac-Mul-8} has a multiplicative polynomial solution, not identically one, of the form $y(x)=\prod_{k=0}^{n}(a_k)^{(x-1)^k}$ ($a_k\in\mathbb{R}_e$) if and only if $\gamma=n(n+\alpha+\beta+1)$ for every non-negative integer $n$. Moreover, the solution $y^c$ ($c$ constant) is the only multiplicative polynomial solution, i.e. every other solution of  \eqref{Jac-Mul-8} that is linearly independent to $y$ on $(-1,1)$ is not a multiplicative polynomial.
\end{theorem}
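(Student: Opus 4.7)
The plan is to reduce \eqref{Jac-Mul-8} to the classical Jacobi differential equation through the substitution $u=\ln y$, and then to transfer the known polynomial-solution theory of the latter back to the multiplicative setting.

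\emph{Logarithmic reduction.} Since $y$ is positive, set $u(x)=\ln y(x)$. Formula \eqref{Jac-Mul-23-b} gives $\ln y^{*}=u'$, and iterating it yields $\ln y^{**}=u''$. Taking the natural logarithm on both sides of \eqref{Jac-Mul-8} therefore produces
\[
(1-x^2)u''(x)+\bigl[\beta-\alpha-(\alpha+\beta+2)x\bigr]u'(x)+\gamma\,u(x)=0,
\]
which is precisely the classical Jacobi ODE. A multiplicative polynomial $y=\prod_{k=0}^{n}a_k^{(x-1)^k}$ fails to be identically $1$ exactly when $u=\sum_{k=0}^{n}\ln(a_k)(x-1)^k$ is a nonzero polynomial of some exact degree $m\le n$, which reduces the theorem to characterizing the values of $\gamma$ for which the classical Jacobi ODE admits a nonzero polynomial solution.

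\emph{The spectral condition.} For the \emph{only if} direction I substitute $u(x)=\sum_{k=0}^{m}c_k(x-1)^k$ with $c_m\neq 0$ into the classical ODE and match the coefficient of the highest-degree monomial; this yields $c_m\bigl[\gamma-m(m+\alpha+\beta+1)\bigr]=0$, forcing $\gamma=m(m+\alpha+\beta+1)$. Because the map $m\mapsto m(m+\alpha+\beta+1)$ is strictly increasing on $\{0,1,2,\dots\}$ under the hypothesis $\alpha,\beta>-1$, the integer $n=m$ is uniquely determined by $\gamma$. For the \emph{if} direction, with $\gamma=n(n+\alpha+\beta+1)$ the classical Jacobi polynomial $P_n^{(\alpha,\beta)}$ already solves the ODE, and expanding it in powers of $x-1$ via \eqref{Jac-Mul-26} shows that $y_0(x):=\exp\bigl(P_n^{(\alpha,\beta)}(x)\bigr)$ has precisely the multiplicative polynomial form stated.

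\emph{Uniqueness.} The classical Jacobi ODE is linear of order two, so its solution space is two-dimensional, and a companion linearly independent solution is the Jacobi function of the second kind $Q_n^{(\alpha,\beta)}$, which exhibits logarithmic or fractional-power singularities at $x=\pm 1$ and therefore is not a polynomial. Thus every polynomial solution is a scalar multiple $c\,P_n^{(\alpha,\beta)}$, and exponentiating gives $y_0^{c}$. By Proposition \ref{Jac-Mul-101}, any solution of \eqref{Jac-Mul-8} multiplicatively independent from $y_0$ has a logarithm linearly independent from $P_n^{(\alpha,\beta)}$ inside the two-dimensional kernel of the Jacobi ODE, and that logarithm cannot be polynomial; the corresponding multiplicative solution is therefore not a multiplicative polynomial.

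The main obstacle is the uniqueness step: the claim that the second linearly independent solution of the Jacobi ODE is genuinely non-polynomial is classical but is not set up in the paper. A self-contained justification requires a Frobenius analysis at the regular singular point $x=1$, whose indicial exponents are $0$ and $-\alpha$. The polynomial branch belongs to the exponent $0$, whereas the second branch contributes either a factor $(1-x)^{-\alpha}$ (non-polynomial whenever $\alpha\notin\mathbb{Z}_{\ge 0}$) or, when the two exponents differ by a non-negative integer, an additional logarithmic term—both cases ruling out a polynomial outcome.
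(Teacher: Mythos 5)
Your argument is correct in substance, but it takes a genuinely different route from the paper's. You linearize at the outset: taking logarithms via \eqref{Jac-Mul-23-b} turns \eqref{Jac-Mul-8} into the classical Jacobi equation for $u=\ln y$; the eigenvalue condition comes from matching the leading coefficient of a polynomial $u$; the ``if'' direction is the known fact that $P_n^{(\alpha,\beta)}$ solves that equation together with the expansion \eqref{Jac-Mul-26}; and uniqueness is delegated to the classical fact that any solution independent of $P_n^{(\alpha,\beta)}$ is non-polynomial. The paper instead stays inside multiplicative calculus: it substitutes the multiplicative power series $\prod_{k}(a_k)^{(x-1)^k}$ directly into \eqref{Jac-Mul-8}, derives the coefficient recurrence $(a_k)^{\gamma-k(k+\alpha+\beta+1)}(a_{k+1})^{-2(k+1)(k+\alpha+1)}=1$, and reads off that the series terminates exactly when $\gamma=n(n+\alpha+\beta+1)$; this recurrence is then reused right after the theorem to compute the $a_k$ explicitly and obtain \eqref{Jac-Mul-28}, so the series computation is not redundant work. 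For uniqueness the paper proves a multiplicative Abel/Wronskian identity, $\left((y^*\odot z)/(z^*\odot y)\right)^{(1-x^2)\omega(x)}=C$, and lets $x\to\pm1$ following Szeg\H{o}'s Theorem 4.2.2: since $(1-x^2)\omega(x)\to0$ for $\alpha,\beta>-1$, a nonzero $C$ is incompatible with both logarithms being polynomials, while $C=0$ forces linear dependence. Your uniqueness step appeals instead to the Jacobi function of the second kind and Frobenius theory, which, as you yourself note, is not set up in the paper; your sketch is essentially right, but the dichotomy is slightly misstated: when $\alpha$ is a positive integer the logarithmic term in the second Frobenius solution at $x=1$ may be absent, and one must then observe that the remaining branch $(1-x)^{-\alpha}\sum_{k\ge0}b_k(1-x)^k$ with $b_0\neq0$ has a genuine pole, which is equally non-polynomial (for $\alpha=0$ the logarithm is forced). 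If you want to avoid importing Frobenius theory, the Abel-identity limit argument just described closes this step by elementary means, and that is precisely what the paper does; your approach, in exchange, makes transparent that the multiplicative theorem is the exact exponential image of the classical polynomial-solution theory.
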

\begin{proof}
Assume that the solution of \eqref{Jac-Mul-8} is infinitely multiplicative differentiable at a neighbourhood of $x_0=1$, so Theorem \ref{Jac-Mul-5} guarantees that\eqref{Jac-Mul-1} has a solution in multiplicative power series of the form $y(x)=\prod_{k=0}^{\infty} (a_k)^{(x-1)^k}.$ Substituting in \eqref{Jac-Mul-8}, we get
\begin{equation*}
  \begin{split}
     1&=  \prod_{k=2}^{\infty}(a_k)^{k(k-1)(1-x^2)(x-1)^{k-2}}
     \prod_{k=1}^{\infty}(a_k)^{k(\beta-\alpha-(\alpha+\beta+2)x)(x-1)^{k-1}}\prod_{k=0}^{\infty}(a_k)^{\gamma(x-1)^k}\\
&=\left(a_0^{\gamma}a_1^{-2(\alpha+1)} \right)^{(x-1)^0}
\left(a_1^{\gamma-(\alpha+\beta+2)}a_2^{-4(\alpha+2)} \right)^{(x-1)}\\
&\times
\prod_{k=2}^{\infty}\left((a_k)^{\gamma-k(k+\alpha+\beta+1)} (a_{k+1})^{-2(k+1)(k+\alpha+1)}\right)^{(x-1)^k}  \\
       & =\prod_{k=0}^{\infty}\left((a_k)^{\gamma-k(k+\alpha+\beta+1)} (a_{k+1})^{-2(k+1)(k+\alpha+1)}\right)^{(x-1)^k}.
  \end{split}
\end{equation*}
As a consequence, we have 
\begin{equation}\label{Mul-Jac-10}
    (a_k)^{\gamma-k(k+\alpha+\beta+1)} (a_{k+1})^{-2(k+1)(k+\alpha+1)}=1, \ \ k=0,1,2,\ldots.
\end{equation}

Assuming that $y(x)=\prod_{k=0}^{n}(a_k)^{(x-1)^k}$, then $a_n$ is the last non-one coefficient, i.e., $a_{k+1}=1$ for $k\geq n$. Taking $k=n$ in \eqref{Mul-Jac-10} we deduce that the exponent of $a_n$ must be $\gamma=n(n+\alpha+\beta+1)$. Conversely, if 
$\gamma=n(n+\alpha+\beta+1)$, then from \eqref{Mul-Jac-10} we get $a_{k+1}=1$ for $k\geq n$.

Moreover, assume that $y(x)=\prod_{k=0}^{n}(a_k)^{(x-1)^k}$ and $z(x)$ are both multiplicative solutions of \eqref{Jac-Mul-8} or, equivalently, are solutions of \eqref{Jac-Mul-25} with $r=n$. In such a case, 
\begin{equation*}
    \left[ (y^*)^{(1-x^2)\omega(x)}\right]^*\oplus y^{n(n+\alpha+\beta+1)\omega(x)}=1, \ \ \left[ (z^*)^{(1-x^2)\omega(x)}\right]^*\oplus z^{n(n+\alpha+\beta+1)\omega(x)}=1,
\end{equation*}
since $1\odot z=1\odot y=1$, using the distributive property of $\odot$, we obtain
\begin{equation*}
    1=\frac{\left(\left[ (y^*)^{(1-x^2)\omega(x)}\right]^*\odot z\right)
    \oplus \left(y^{n(n+\alpha+\beta+1)\omega(x)} \odot z\right)}{\left(\left[ (z^*)^{(1-x^2)\omega(x)}\right]^*\odot y\right)
    \oplus \left(z^{n(n+\alpha+\beta+1)\omega(x)} \odot y\right)}.
\end{equation*}
From \eqref{Jac-Mul-38}, $y^{n(n+\alpha+\beta+1)\omega(x)} \odot z=z^{n(n+\alpha+\beta+1)\omega(x)} \odot y$, and then 
\begin{equation}\label{Jac-Mul-41}
    1=\frac{\left(\left[ (y^*)^{(1-x^2)\omega(x)}\right]^*\odot z\right)
    }{\left(\left[ (z^*)^{(1-x^2)\omega(x)}\right]^*\odot y\right)}.
\end{equation}
On the other hand, from \eqref{Jac-Mul-37} we get 
\begin{equation*}
     \frac{\left((y^*)^{(1-x^2)\omega(x)}\odot z\right)^*}{\left((z^*)^{(1-x^2)\omega(x)}\odot y\right)^*} =\frac{\left(\left[ (y^*)^{(1-x^2)\omega(x)}\right]^*\odot z\right)
    \oplus \left((y^*)^{(1-x^2)\omega(x)} \odot z^*\right)}{\left(\left[ (z^*)^{(1-x^2)\omega(x)}\right]^*\odot y\right)
    \oplus \left((z^*)^{(1-x^2)\omega(x)} \odot y^*\right)}.
\end{equation*}
Using \eqref{Jac-Mul-38}, $(y^*)^{(1-x^2)\omega(x)} \odot z^*=(z^*)^{(1-x^2)\omega(x)} \odot y^*$, and from  \eqref{Jac-Mul-41} we conclude
\begin{equation*}
    \left( \frac{(y^*\odot z)^{(1-x^2)\omega(x)}}{(z^*\odot y)^{(1-x^2)\omega(x)}} \right)^*=\left( \frac{(y^*)^{(1-x^2)\omega(x)}\odot z}{(z^*)^{(1-x^2)\omega(x)}\odot y} \right)^*=1.
\end{equation*}
From Proposition \ref{Jac-Mult-24}, there exists $C\in \mathbb{R}_e$ such that $ \left(   \frac{y^*\odot z}{z^*\odot y} \right)^{(1-x^2)\omega(x)}=C$, which is equivalent to
\begin{equation*}
  (1-x^2)\omega(x)\left[  (\ln y)' \ln z-(\ln z)' \ln y\right]=(1-x^2)\omega(x)\left|
  \begin{array}{cc}
    \ln z & \ln y \\
     (\ln z)' & (\ln y)' \\
  \end{array}
\right|
  =\ln C.
\end{equation*} 
If $\ln C =0$, it follows that $\ln y$ and $\ln z$ are linearly dependent on $(-1,1)$, i.e., $\ln z$ is a constant multiple of $\ln y= \sum_{k=0}^n \ln(a_k)(x-1)^k$, i.e. $\ln z$ is a polynomial. If $\ln C \neq0$, we conclude that $\ln y$ and $\ln z$ are linearly independent on $(-1,1)$. Following \cite[Theorem $4.2.2$]{Szego}, taking $ x\rightarrow \pm 1$ we find that $\ln y$ and $\ln z$ cannot be both polynomials. Using Proposition \ref{Jac-Mul-101}, it follows that if $y$ and $z$ are linearly independent on $(-1,1)$, $z$ cannot be a multiplicative polynomial.
\end{proof}

As a consequence, the multiplicative Jacobi equation \eqref{Jac-Mul-1} (with $r=n$) has a multiplicative polynomial solution of the form 
\begin{equation}\label{Jac-Mul-24}
    y_n(x)=\prod_{k=0}^n(a_k)^{(x-1)^k}=a_0\prod_{k=1}^n(a_k)^{(x-1)^k},
\end{equation}
for every non-negative integer $n$. From \eqref{Mul-Jac-10} and taking into account that $n(\alpha+\beta+n+1)-(k-1)(\alpha+\beta+k)=(n-(k-1))(\alpha+\beta+n+k)$, we get for $k=1,2\ldots,n$
\begin{equation*}
\begin{split}
    a_{k}&=(a_{k-1})^{\frac{(n-(k-1))(\alpha+\beta+n+k)}{2(k)(\alpha+k)}}\\
    &=(a_{k-2})^{\frac{[(n-(k-1))(n-(k-2))][(\alpha+\beta+n+k)(\alpha+\beta+n+k-1)]}{2^2[(k)(k-1)][(\alpha+k)(\alpha+k-1)]}}\\
    & \ \ \vdots \\
    &=(a_{0})^{\frac{[(n-(k-1))(n-(k-2))\cdots (n)][(n+\alpha+\beta+k)(n+\alpha+\beta+k-1)\cdots(n+\alpha+\beta+1)]}{2^k[k(k-1)\cdots 1][(\alpha+k)(\alpha+k-1)\cdots (\alpha+1)]}},\\
    &=(a_{0})^{\frac{1}{2^k}\binom{n}{k}\frac{(\alpha+\beta+n+1)_k}{(\alpha+1)_k} }.
\end{split}
\end{equation*}
Substituting in \eqref{Jac-Mul-24}, for $n\geq1$ we obtain 
\begin{equation*}
    y_n(x)=(a_0)^{1+\sum_{k=1}^{n-1}\binom{n}{k}\frac{(\alpha+\beta+n+1)_{k}}{(\alpha+1)_k}\left( \frac{x-1}{2} \right)^k+\frac{(\alpha+\beta+n+1)_{n}}{(\alpha+1)_n}\left( \frac{x-1}{2} \right)^n}=(a_0)^{\frac{n!}{(\alpha+1)_n}P_n^{(\alpha,\beta)}(x)},
\end{equation*}
where $P_n^{(\alpha,\beta)}(x)$ is the Jacobi classical polynomial, as in \eqref{Jac-Mul-26}, and $(\cdot)_n$ is the Pochhammer symbol as in \eqref{Jac-Mul-27}. Since $y_n$ being a solution of  \eqref{Jac-Mul-1} implies that  $y^c$ ($c\in \mathbb{R}$) is also a solution, it is customary to choose an appropriate value for $a_0$. In particular, if we take $a_0=e^{\frac{(\alpha+1)_n}{n!}}$, then the solution $y_n(x)$ will be called the \emph{multiplicative Jacobi polynomial} of degree $n$ and will be denoted by 
\begin{equation}\label{Jac-Mul-28}
    \tilde{P}_n^{(\alpha,\beta)}(x)=\exp\left(P_n^{(\alpha,\beta)}(x)\right)=e^{P_n^{(\alpha,\beta)}(x)}, \ \ n\geq 0,
\end{equation}
or, equivalently, 
\begin{equation}\label{Jac-Mul-28-b}
    P_n^{(\alpha,\beta)}(x)=\ln \left(\tilde{P}_n^{(\alpha,\beta)}(x)\right), \ \ n\geq 0.
\end{equation}


\section{Some properties of multiplicative Jacobi polynomials}

In this section, we deduce some properties of multiplicative Jacobi polynomials that are similar to those of the classical Jacobi polynomials. In particular, we also prove some relations for multiplicative Legendre polynomials and the multiplicative Chebyshev polynomials of the first and second kind.

\begin{proposition}\label{Jac-Mul-39}
The sequence $\{ \tilde{P}_n^{(\alpha,\beta)}\}_{n\geq 0}$ of multiplicative Jacobi polynomials are multiplicative orthogonal on $[-1,1]$ with respect to $\omega(x)=(1-x)^{\alpha}(1+x)^{\beta}$ ($\alpha > -1$, $\beta > -1$). Furthermore, 
\begin{equation*}
    \int_{-1}^1 \left(\tilde{P}_n^{(\alpha,\beta)}(x)\odot \tilde{P}_m^{(\alpha,\beta)}(x)\right)^{\omega(x)dx}=\exp\left(\frac{2^{\alpha+\beta+1}}{2n+\alpha+\beta+1}\frac{\Gamma(n+\alpha+1)\Gamma(n+\beta+1)}{n!\Gamma(n+\alpha+\beta+1)}\delta_{n,m}\right).
\end{equation*}
\end{proposition}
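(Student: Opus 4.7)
The plan is to reduce everything to the classical Jacobi orthogonality relation \eqref{Jac-Mul-30} via the exponential bridge provided by \eqref{Jac-Mul-28}--\eqref{Jac-Mul-28-b} together with Lemma \ref{Jac-Mul-43}. Concretely, I will compute $\langle \tilde{P}_n^{(\alpha,\beta)}, \tilde{P}_m^{(\alpha,\beta)} \rangle_{*,\omega}$ by taking logarithms, applying the classical identity, and exponentiating back.

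First I would apply Lemma \ref{Jac-Mul-43} with $f = \tilde{P}_n^{(\alpha,\beta)}$ and $g = \tilde{P}_m^{(\alpha,\beta)}$. Since both multiplicative Jacobi polynomials are positive (being of the form $e^{P_n^{(\alpha,\beta)}(x)}$), they belong to $\mathbf{L}_{*}^{2}([-1,1],\omega)$ provided the classical $P_n^{(\alpha,\beta)}$ belong to $\mathbf{L}^{2}([-1,1],\omega)$, which is guaranteed by \eqref{Jac-Mul-30}. Then \eqref{Jac-Mul-45} yields
\begin{equation*}
\ln\left(\langle \tilde{P}_n^{(\alpha,\beta)}, \tilde{P}_m^{(\alpha,\beta)} \rangle_{*,\omega}\right)
= \langle \ln \tilde{P}_n^{(\alpha,\beta)}, \ln \tilde{P}_m^{(\alpha,\beta)} \rangle_{\omega}
= \langle P_n^{(\alpha,\beta)}, P_m^{(\alpha,\beta)} \rangle_{\omega},
\end{equation*}
where the last equality uses \eqref{Jac-Mul-28-b}.

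Next I would invoke the classical orthogonality relation \eqref{Jac-Mul-30}, which states that $\langle P_n^{(\alpha,\beta)}, P_m^{(\alpha,\beta)} \rangle_{\omega}$ equals the right-hand side of \eqref{Jac-Mul-30}. Exponentiating both sides of the displayed identity and recalling the definition \eqref{Jac-Mul-34-b} of the $^*$-inner product gives exactly
\begin{equation*}
\int_{-1}^{1}\bigl(\tilde{P}_n^{(\alpha,\beta)}(x)\odot \tilde{P}_m^{(\alpha,\beta)}(x)\bigr)^{\omega(x)dx}
= \exp\!\left(\frac{2^{\alpha+\beta+1}}{2n+\alpha+\beta+1}\frac{\Gamma(n+\alpha+1)\Gamma(n+\beta+1)}{n!\,\Gamma(n+\alpha+\beta+1)}\delta_{n,m}\right),
\end{equation*}
which is the claim. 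Multiplicative orthogonality (i.e., the fact that the expression equals $1$ when $n \neq m$) follows immediately because $\delta_{n,m}=0$ forces the exponent to vanish.

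There is no substantial obstacle: the proof is a one-line transfer from the classical case through Lemma \ref{Jac-Mul-43}. The only point that deserves a brief check is the integrability condition placing $\tilde{P}_n^{(\alpha,\beta)}$ in $\mathbf{L}_{*}^{2}([-1,1],\omega)$, which, by the final assertion of Lemma \ref{Jac-Mul-43}, is equivalent to $P_n^{(\alpha,\beta)} \in \mathbf{L}^{2}([-1,1],\omega)$ — and this is guaranteed by the classical theory since $\alpha,\beta>-1$.
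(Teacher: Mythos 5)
Your proposal is correct and follows essentially the same route as the paper: both reduce the multiplicative inner product to the classical one via the relation $\ln \tilde{P}_n^{(\alpha,\beta)} = P_n^{(\alpha,\beta)}$ and then invoke the classical orthogonality \eqref{Jac-Mul-30}; the paper unfolds the multiplicative integral directly through \eqref{Jac-Mul-29}, while you package the identical computation through Lemma \ref{Jac-Mul-43}, which is only a cosmetic difference.
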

\begin{proof}
Suppose $n$ and $m$ are non-negative integers. From \eqref{Jac-Mul-29} and \eqref{Jac-Mul-28-b} 
    \begin{equation*}
        \begin{split}
            \int_{-1}^1 \left(\tilde{P}_n^{(\alpha,\beta)}(x)\odot \tilde{P}_m^{(\alpha,\beta)}(x)\right)^{\omega(x)dx}&= e^{\int_{-1}^1 \ln \left[ \left(\tilde{P}_n^{(\alpha,\beta)}(x)\right)^{P_m^{(\alpha,\beta)}(x)\omega(x)}  \right]dx}\\
            &=e^{\int_{-1}^1P_n^{(\alpha,\beta)}(x)P_m^{(\alpha,\beta)}(x)\omega(x)dx},
        \end{split}
    \end{equation*}
the required result follows from the orthogonality of classical Jacobi polynomials \eqref{Jac-Mul-30}.
\end{proof}
The interval of orthogonality can be generalized to the interval $[a,b]$, $(a<b)$, by taking $\tilde{P}_n^{(\alpha,\beta)}\left(2\frac{x-a}{b-a}-1\right)$ and $\omega(x)=(b-x)^{\alpha}(a+x)^{\beta}$. The multiplicative Jacobi polynomials satisfy properties similar to those of the classical Jacobi polynomials.

\begin{proposition}\label{Jac-Mul-35}
Let $\{ \tilde{P}_n^{(\alpha,\beta)}\}_{n\geq 0}$ be the sequence of multiplicative Jacobi polynomials. The following properties hold:
\begin{enumerate}
    \item $\frac{d^*}{dx}\left(\tilde{P}_n^{(\alpha,\beta)}(x)\right)=\left(\tilde{P}_{n-1}^{(\alpha+1,\beta+1)}(x)\right)^{\frac{1}{2}(n+\alpha+\beta+1)}$ and
    \begin{equation*}
        \frac{d}{dx}\left(\tilde{P}_n^{(\alpha,\beta)}(x)\right)=\frac{1}{2}(n+\alpha+\beta+1)P_{n-1}^{(\alpha+1,\beta+1)}(x)\tilde{P}_n^{(\alpha,\beta)}(x).
    \end{equation*}
\item Multiplicative Rodrigues' formula
\begin{equation*}
    \tilde{P_n}^{(\alpha,\beta)}(x)=\left[\frac{d^{*(n)}}{dx^n}\left( e^{(1-x^2)^n\omega(x)}\right)\right]^{\frac{1}{(-1)^nn!2^n\omega(x)}}, \ \ n\geq 0.
\end{equation*}
\item Three-term multiplicative recurrence relation 
\begin{equation*}
    \left(\tilde{P}_{n}^{(\alpha,\beta)}(x) \right)^{A(n,\alpha,\beta)}=\frac{\left(\tilde{P}_{n-1}^{(\alpha,\beta)}(x) \right)^{B(x,n,\alpha,\beta)}}{\left(\tilde{P}_{n-2}^{(\alpha,\beta)}(x) \right)^{C(n,\alpha,\beta)}}, \ \ n\geq 1,
\end{equation*}
with $\tilde{P}_{0}^{(\alpha,\beta)}(x)=e$ and $\tilde{P}_{-1}^{(\alpha,\beta)}(x)=1$, where $A(n,\alpha,\beta)$, $B(x,n,\alpha,\beta)$ and $C(n,\alpha,\beta)$ are as in \eqref{Jac-Mul-32}.
\item $\tilde{P_n}^{(\alpha,\beta)}(1)=e^{\binom{n+\alpha}{n}}$ and $\tilde{P_n}^{(\alpha,\beta)}(-x)=\left( \tilde{P_n}^{(\beta,\alpha)}(x)\right)^{(-1)^n}$.
\item $\tilde{P}_n^{(\alpha,\beta)}(x)$ is a multiplicative polynomial of degree $n$ whose leading coefficient is 
\begin{equation*}
    \tilde{k}_n\equiv\tilde{k}_n(\alpha,\beta)=e^{\frac{1}{2^n}\binom{2n+\alpha+\beta}{n}}, \ \ n\geq 0.
\end{equation*}
\end{enumerate}
\end{proposition}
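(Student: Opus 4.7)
The unifying principle for all five items is the identification $\tilde{P}_n^{(\alpha,\beta)}(x)=e^{P_n^{(\alpha,\beta)}(x)}$ from \eqref{Jac-Mul-28}, which translates every statement about $\tilde{P}_n^{(\alpha,\beta)}$ into a corresponding statement about $P_n^{(\alpha,\beta)}$ that is already available in Proposition \ref{Jac-Mul-31}. The plan is to treat each of the five properties as a dictionary translation, invoking the classical counterpart as a black box. I would open the argument by emphasizing this correspondence, so that every subsequent calculation reduces to a short chain of exponential/logarithmic manipulations.

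For item (1), I would apply \eqref{Jac-Mul-23-b} to write $\left(\tilde{P}_n^{(\alpha,\beta)}\right)^*(x)=e^{(\ln \tilde{P}_n^{(\alpha,\beta)})'(x)}=e^{(P_n^{(\alpha,\beta)})'(x)}$, then plug in the classical derivative identity (part 1 of Proposition \ref{Jac-Mul-31}) to obtain $e^{\frac{1}{2}(n+\alpha+\beta+1)P_{n-1}^{(\alpha+1,\beta+1)}(x)}$, which is exactly $\bigl(\tilde{P}_{n-1}^{(\alpha+1,\beta+1)}(x)\bigr)^{\frac{1}{2}(n+\alpha+\beta+1)}$. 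The classical-derivative version follows from $\frac{d}{dx}e^{P_n(x)}=P_n'(x)e^{P_n(x)}$. For item (2), the key is that $\ln\bigl(e^{(1-x^2)^n\omega(x)}\bigr)=(1-x^2)^n\omega(x)$, so by the definition $f^{*(n)}=e^{(\ln f)^{(n)}}$ recalled after \eqref{Jac-Mul-23-b} one has $\frac{d^{*(n)}}{dx^n}\bigl(e^{(1-x^2)^n\omega(x)}\bigr)=e^{\frac{d^n}{dx^n}[(1-x^2)^n\omega(x)]}$; raising this to the power $\frac{1}{(-1)^n n!\, 2^n\omega(x)}$ and invoking the classical Rodrigues formula yields $e^{P_n^{(\alpha,\beta)}(x)}=\tilde{P}_n^{(\alpha,\beta)}(x)$.

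For item (3), I would simply exponentiate both sides of the classical three-term recurrence $A(n,\alpha,\beta)P_n=B(x,n,\alpha,\beta)P_{n-1}-C(n,\alpha,\beta)P_{n-2}$, so that addition becomes multiplication and subtraction becomes division; the initial conditions $\tilde{P}_0^{(\alpha,\beta)}=e^{P_0}=e^{1}=e$ and $\tilde{P}_{-1}^{(\alpha,\beta)}=e^{0}=1$ are immediate. Item (4) drops out of substituting $x=1$ and $x\mapsto -x$ in the exponential form, combined with the classical values $P_n^{(\alpha,\beta)}(1)=\binom{n+\alpha}{n}$ and $P_n^{(\alpha,\beta)}(-x)=(-1)^nP_n^{(\beta,\alpha)}(x)$. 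Item (5) follows by noting that a multiplicative polynomial $\prod_{k=0}^n a_k^{(x-x_0)^k}$ is, upon taking logarithms, a classical polynomial of the same degree with logarithmic coefficients; hence the multiplicative degree of $\tilde{P}_n^{(\alpha,\beta)}$ equals the classical degree of $P_n^{(\alpha,\beta)}$, namely $n$, and the multiplicative leading coefficient is $e^{k_n(\alpha,\beta)}$.

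Since the classical versions of all five statements are quoted directly in Proposition \ref{Jac-Mul-31}, the only real work is bookkeeping with the exponential/logarithm correspondence; I expect no serious obstacle. The most delicate step is item (2), where one must be careful that the $n$-th multiplicative derivative in the Rodrigues statement is being applied to a positive function and then raised to a function-valued power $\frac{1}{(-1)^n n!\,2^n\omega(x)}$ in the sense of \eqref{Jac-Mul-33}; I would double-check that step by explicitly taking logarithms of both sides to confirm the reduction to the classical Rodrigues formula before presenting the compact multiplicative form.
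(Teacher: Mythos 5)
Your proposal is correct and follows essentially the same route as the paper: items (1) and (2) are obtained from the identification $\tilde{P}_n^{(\alpha,\beta)}=e^{P_n^{(\alpha,\beta)}}$ together with \eqref{Jac-Mul-23-b} and the property \eqref{Jac-Mul-33} (equivalently, $f^{*(n)}=e^{(\ln f)^{(n)}}$), exactly as in the paper's proof, while items (3)--(5) are, as you say, direct translations of Proposition \ref{Jac-Mul-31} under the exponential correspondence, which is precisely how the paper dismisses them.
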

\begin{proof}
We only prove the first two, since the others are immediate from the properties given in Propositions \ref{Jac-Mul-31} and \eqref{Jac-Mul-28}.
    \begin{enumerate}
        \item From \eqref{Jac-Mul-23-b} and \eqref{Jac-Mul-28} we have $\frac{d^*}{dx}\left(\tilde{P}_n^{(\alpha,\beta)}(x)\right)=e^{(P_n^{(\alpha,\beta)})'}=e^{\frac{1}{2}(n+\alpha+\beta+1)P_{n-1}^{(\alpha+1,\beta+1)}}$, this is the first property.
    \item Using Rodrigues' formula for classical Jacobi polynomials we have
    \begin{equation}\label{Jac-Mul-34}        \tilde{P}_n^{(\alpha,\beta)}(x)=\left( e^{[(1-x^2)^n\omega(x)]^{(n)}} \right)^{\frac{1}{(-1)^nn!2^n\omega(x)}}.
    \end{equation}
From \eqref{Jac-Mul-33}, with $f(x)=e$ and $\phi=(1-x^2)^n\omega(x)$, we get $e^{[(1-x^2)^n\omega(x)]^{(n)}}=\frac{d^{*(n)}}{dx^n}\left( e^{(1-x^2)^n\omega(x)}\right)$, substituting in \eqref{Jac-Mul-34} the result follows.
    \end{enumerate}
\end{proof}

Some particular cases, famous in classical calculus (see \cite{Chihara}), that have analogs in multiplicative calculus are the following: 
\begin{enumerate}
    \item The multiplicative Legendre polynomials ($\alpha=\beta=0$)
    \begin{equation*}
        \tilde{P}_n(x)=e^{P_n^{(0,0)}(x)}, \ \ n\geq 0.
    \end{equation*}
    \item The multiplicative Chebyshev polynomials of the first kind ($\alpha=\beta=-\frac{1}{2}$)
    \begin{equation*}
        \tilde{T}_n(x)=e^{2^{2n}\binom{2n}{n}^{-1}P_n^{(-\frac{1}{2},-\frac{1}{2})}(x)}, \ \ n\geq 0.
    \end{equation*}
    \item The multiplicative Chebyshev polynomials of the second kind ($\alpha=\beta=\frac{1}{2}$)
    \begin{equation*}
        \tilde{U}_n(x)=e^{2^{2n}\binom{2n+1}{n+1}^{-1}P_n^{(\frac{1}{2},\frac{1}{2})}(x)}, \ \ n\geq 0.
    \end{equation*}
    \item The multiplicative Gegenbauer (or Ultraspherical) polynomials ($\alpha=\beta\neq -\frac{1}{2}$)
    \begin{equation*}
        \tilde{P}_n^{(\alpha+\frac{1}{2})}(x)=e^{\binom{2\alpha}{\alpha}^{-1}\binom{n+2\alpha}{\alpha}P_n^{(\alpha,\alpha)}(x)}, \ \ n\geq 0.
    \end{equation*}
\end{enumerate}

\begin{example}
According to \eqref{Jac-Mul-28} the first six multiplicative Chebyshev polynomials of the first kind are 
\begin{multicols}{2}
\begin{figure}[H]
\centering
\includegraphics[width=7cm, height=3.2cm]{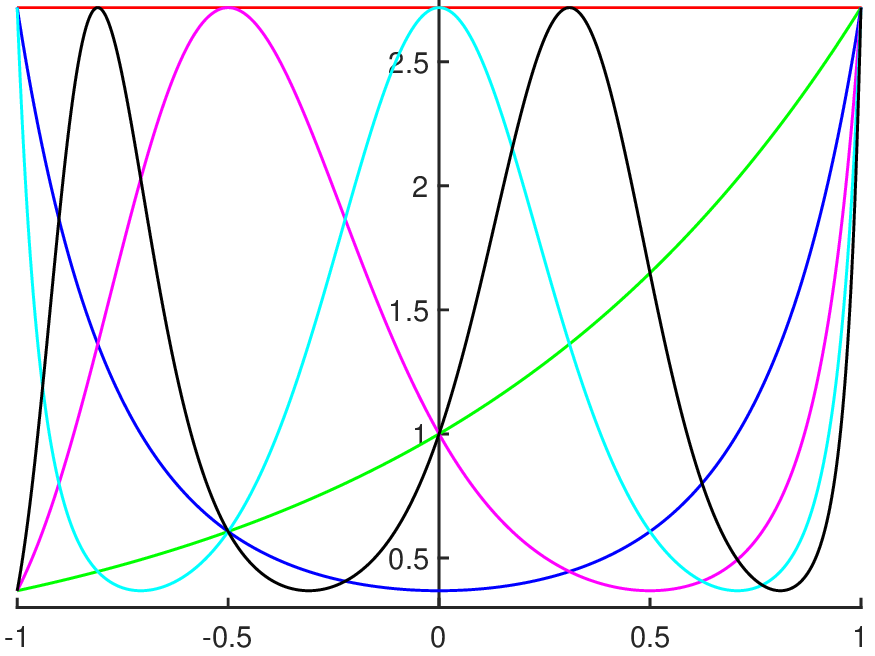}
\caption{ Plots of the first six Chebyshev multiplicative polynomials of the first kind on the interval $[-1, 1]$.}
\label{fig:figura3}
\end{figure}
\begin{equation*}
    \begin{split}
  \tilde{T}_0(x)&=e^1\  (\textcolor{red}{\rule{0.5cm}{0.5mm}}),  \\
  \tilde{T}_1(x)&=e^{x} \  (\textcolor{green}{\rule{0.5cm}{0.5mm}}),  \\
  \tilde{T}_2(x)&=e^{2x^2-1}\  (\textcolor{blue}{\rule{0.5cm}{0.5mm}}),\\
  \tilde{T}_3(x)&=e^{4x^3-3x}\  (\textcolor{magenta}{\rule{0.5cm}{0.5mm}}),\\
  \tilde{T}_4(x)&=e^{8x^4-8x^2+1}\  (\textcolor{cyan}{\rule{0.5cm}{0.5mm}}),\\
  \tilde{T}_5(x)&=e^{16x^5-20x^3+5x}\  (\textcolor{black}{\rule{0.5cm}{0.5mm}}).
    \end{split}
\end{equation*}
\end{multicols}
\noindent Notice that, for all $n$, it is possible to know the value on the boundary by using Proposition \ref{Jac-Mul-35}
\begin{equation*}
 \tilde{T}_n(1)=e^{2^{2n}\binom{2n}{n}^{-1}}\binom{n-\frac{1}{2}}{n}=e \mbox{ and } \tilde{T}_n(-1)=e^{(-1)^n}.
\end{equation*}
\end{example}

As in classical calculus, multiplicative Chebyshev polynomials of the first and second kind satisfy several properties stated next. We do not present the proof since they can be easily deduced from the properties of classical Chebyshev polynomials and the definition of their multiplicative analogs. The associated properties in the classical case can be found in \cite{Boy,Chihara}.
\begin{proposition}
    Let $\{ \tilde{P}_n\}_{n\geq 0}$, $\{ \tilde{T}_n\}_{n\geq 0}$, and $\{ \tilde{U}_n\}_{n\geq 0}$ be the multiplicative Legendre, Chebyshev of the first and second kind polynomials, respectively. They satisfy the following properties:
    \begin{enumerate}
    \item Orthogonality 
\begin{equation*}
    \begin{split}
    &\int_{-1}^1 \left(\tilde{P}_n(x)\odot \tilde{P}_m(x)\right)^{dx}=\exp\left(\frac{2}{2n+1}\delta_{n,m}\right),\\    
    &\int_{-1}^1 \left(\tilde{U}_n(x)\odot \tilde{U}_m(x)\right)^{\sqrt{1-x^2}dx}=\exp\left(\frac{\pi}{2}\delta_{n,m}\right),\\
    &\int_{-1}^1 \left(\tilde{T}_n(x)\odot \tilde{T}_m(x)\right)^{\frac{1}{\sqrt{1-x^2}}dx}= \left\{ \begin{array}{lcl} 1, & if & n \neq m,  \\ 
     e^{\pi}, & if & n=m=0,
    \\ e^{\frac{\pi}{2}}, & if & n=m. \end{array} \right.
    \end{split}
\end{equation*}
    \item Three-term multiplicative recurrence relation
        \begin{equation*}
    \tilde{P}_{0}(x)
    =e, \ \tilde{P}_{1}(x)=e^x, \mbox{ and }\tilde{P}_{n+1}(x)=\frac{( \tilde{P}_{n}(x))^{\frac{2n+1}{n+1}x}}{( \tilde{P}_{n-1}(x))^{\frac{n}{n+1}}}, 
    \end{equation*}
    \begin{equation*}
    \tilde{T}_{0}(x)
    =e, \ \tilde{T}_{1}(x)=e^x, \mbox{ and }\tilde{T}_{n+1}(x)=\frac{( \tilde{T}_{n}(x))^{2x}}{\tilde{T}_{n-1}(x)}, 
    \end{equation*}
    \begin{equation*}
    \tilde{U}_{0}(x)
    =e, \ \tilde{U}_{1}(x)=e^{2x}, \mbox{ and }\tilde{U}_{n+1}(x)=\frac{( \tilde{U}_{n}(x))^{2x}}{\tilde{U}_{n-1}(x)}.   
    \end{equation*}
    \item Multiplicative derivative, for $n\geq 1$ 
        \begin{equation*}
        \frac{d^*}{dx}\tilde{P}_n(x)=(\tilde{P}_{n-1}(x))^n, \ \frac{d^*}{dx}\tilde{T}_n(x)=(\tilde{U}_{n-1}(x))^n,  \mbox{ and } \   \frac{d^*}{dx}\tilde{U}_n(x)=
        \frac{(\tilde{T}_{n+1}(x))^{\frac{n+1}{x^2-1}}}{(\tilde{U}_{n}(x))^{\frac{x}{x^2-1}}}.
        \end{equation*}
        \item Multiplicative integral
        \begin{equation*}
            \int \left(\tilde{U}_n(x)\right)^{dx}=(\tilde{T}_{n+1}(x))^{\frac{1}{n+1}},\ \
            \int \left(\tilde{T}_n(x)\right)^{dx}=\frac{(\tilde{T}_{n+1}(x))^{\frac{n}{n^2-1}}}{(\tilde{T}_{n}(x))^{\frac{x}{n-1}}}.
        \end{equation*}
        \item Relations between Chebyshev multiplicative polynomials of the first and second kind
\begin{equation*}
    \tilde{T}_n(x)=\frac{\tilde{U}_{n}(x)}{\tilde{U}_{n-2}(x)}\odot e^{\frac{1}{2}}, \ \tilde{T}_{n+1}(x)=\frac{(\tilde{T}_{n}(x) )^x}{(\tilde{U}_{n-1}(x) )^{1-x^2}},  \mbox{ and } \ \tilde{T}_n(x)=\frac{\tilde{U}_{n}(x)}{(\tilde{U}_{n-1}(x))^x}.
\end{equation*}
    \end{enumerate}
\end{proposition}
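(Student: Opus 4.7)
The plan is to reduce every identity stated in the proposition to the corresponding classical identity for Legendre or Chebyshev polynomials via the defining exponential formulas. With the conventional normalisations $T_n(x)=2^{2n}\binom{2n}{n}^{-1}P_n^{(-1/2,-1/2)}(x)$ and $U_n(x)=2^{2n}\binom{2n+1}{n+1}^{-1}P_n^{(1/2,1/2)}(x)$, one has
$$\ln\tilde{P}_n(x)=P_n(x),\qquad \ln\tilde{T}_n(x)=T_n(x),\qquad \ln\tilde{U}_n(x)=U_n(x),$$
where $P_n$, $T_n$, $U_n$ denote the classical Legendre and Chebyshev polynomials of the first and second kind. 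Combined with the translation dictionary $f^*=e^{(\ln f)'}$ from \eqref{Jac-Mul-23-b}, $\int f^{dx}=e^{\int \ln f\,dx}$ from \eqref{Jac-Mul-29}, and $\ln(f\odot g)=\ln f\cdot\ln g$, each multiplicative identity becomes, after taking logarithms, a classical identity in $P_n$, $T_n$ or $U_n$ that is tabulated in \cite{Chihara,Boy}.

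For the orthogonality relations, specialising Proposition \ref{Jac-Mul-39} to $\alpha=\beta\in\{0,\pm 1/2\}$ yields, for instance,
$$\int_{-1}^{1}(\tilde{T}_n\odot\tilde{T}_m)^{(1-x^2)^{-1/2}dx}=\exp\!\Bigl(\int_{-1}^{1}T_n(x)T_m(x)(1-x^2)^{-1/2}dx\Bigr),$$
and the stated constants are then read off from the classical orthogonality relations (including the well-known exceptional value $\pi$ for $n=m=0$ in the first-kind Chebyshev case). For the three-term recurrences I would take logarithms of both sides: the Legendre identity becomes Bonnet's classical recurrence $(n+1)P_{n+1}=(2n+1)xP_n-nP_{n-1}$, while the Chebyshev cases both become $X_{n+1}=2xX_n-X_{n-1}$ for $X\in\{T,U\}$, which are standard. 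The derivative formulas come from applying \eqref{Jac-Mul-23-b} to the exponential definitions, so that $\frac{d^*}{dx}\tilde{T}_n=e^{T_n'(x)}$ (and likewise for the other two families), and then substituting the classical derivative identities; the integrals come analogously from \eqref{Jac-Mul-29}, using the classical antiderivatives of $T_n$ and $U_n$ (for the first-kind case one rearranges $\int T_n\,dx=\tfrac{1}{2}[T_{n+1}/(n+1)-T_{n-1}/(n-1)]$ via the recurrence $T_{n-1}=2xT_n-T_{n+1}$ into $nT_{n+1}/(n^2-1)-xT_n/(n-1)$).

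Finally, the algebraic relations between $\tilde{T}_n$ and $\tilde{U}_n$ follow on exponentiating the classical identities $T_n=\tfrac{1}{2}(U_n-U_{n-2})$, $T_{n+1}=xT_n-(1-x^2)U_{n-1}$ and $T_n=U_n-xU_{n-1}$, where the scalar factor $\tfrac{1}{2}$ is absorbed by the operation $\odot\, e^{1/2}$ via $\ln(a\odot e^{1/2})=\tfrac{1}{2}\ln a$. The only genuine obstacle, and it is purely a bookkeeping one, is to verify carefully that the Jacobi-to-Chebyshev normalisation constants $2^{2n}\binom{2n}{n}^{-1}$ and $2^{2n}\binom{2n+1}{n+1}^{-1}$ cancel at each step, so that the clean identifications $T_n=\ln\tilde{T}_n$ and $U_n=\ln\tilde{U}_n$ may be invoked without residual factors; once this is done, each claim of the proposition is an immediate transcription of a known classical fact, which is exactly why the authors chose to omit the details.
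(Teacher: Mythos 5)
Your reduction is exactly the argument the paper has in mind: the authors omit the proof precisely because, once one writes $\ln\tilde{P}_n=P_n$, $\ln\tilde{T}_n=T_n$, $\ln\tilde{U}_n=U_n$ (your normalisation check, that $2^{2n}\binom{2n}{n}^{-1}P_n^{(-1/2,-1/2)}(1)=1$ and $2^{2n}\binom{2n+1}{n+1}^{-1}P_n^{(1/2,1/2)}(1)=n+1$, is the right bookkeeping) and uses \eqref{Jac-Mul-23-b}, \eqref{Jac-Mul-29} and $\ln(f\odot g)=\ln f\,\ln g$, each item is meant to become a tabulated classical identity. Your handling of the orthogonality constants (including the exceptional value $\pi$ at $n=m=0$), the three recurrences, the $T$- and $U$-derivative formulas, both integrals (your rearrangement of $\int T_n\,dx$ into $\tfrac{n}{n^2-1}T_{n+1}-\tfrac{x}{n-1}T_n$ is correct), and the three $T$--$U$ relations with the factor $\tfrac12$ absorbed by $\odot\,e^{1/2}$, is all sound and is the same route the paper intends.

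There is, however, one step where ``substituting the classical derivative identities'' cannot be carried out: the first formula of item 3. Taking logarithms, $\frac{d^*}{dx}\tilde{P}_n=(\tilde{P}_{n-1})^n$ is equivalent to $P_n'(x)=nP_{n-1}(x)$, which is not a classical Legendre identity and fails for $n\geq 2$ (for instance $P_2'=3x$ while $2P_1=2x$, i.e.\ $\frac{d^*}{dx}\tilde{P}_2=e^{3x}\neq(\tilde{P}_1)^2=e^{2x}$). The valid classical relations are $P_n'-xP_{n-1}'=nP_{n-1}$, which translates to $\frac{d^*}{dx}\tilde{P}_n=(\tilde{P}_{n-1})^n\bigl(\frac{d^*}{dx}\tilde{P}_{n-1}\bigr)^{x}$, or, from Proposition \ref{Jac-Mul-35}(1) with $\alpha=\beta=0$, $\frac{d^*}{dx}\tilde{P}_n=e^{\frac{n+1}{2}P_{n-1}^{(1,1)}(x)}$. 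So that particular claim is not ``an immediate transcription of a known classical fact'': as printed it is false, and your proposal silently passes over it. Everything else in your plan goes through as you describe.
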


\section{Approximation of positive Functions using multiplicative Jacobi-Fourier series}

In this section, we first show that positive functions defined on $[-1,1]$ can be represented by a multiplicative Jacobi-Fourier series. Later on, some examples are presented in which the use multiplicative polynomials instead of classical polynomials is more convenient for approximation purposes.

\begin{theorem}\label{Jac-Mul-54}
Let $\mathbf{L}^2_{*}\left([-1,1],\omega\right)$ be a weighted multiplicative space with respect to the weight $\omega(x)=(1-x)^{\alpha}(1+x)^{\beta}$ as in \eqref{Jac-Mul-201}. If $f\in \mathbf{L}_{*}^{2}\left([-1,1],\omega\right)$, then $f$ can be expressed uniquely as a multiplicative Jacobi-Fourier series of the form
\begin{equation}\label{Jac-Mul-47}
f(x)= \prod_{n=0}^{\infty}e^{f_n} \odot \tilde{P_n}^{(\alpha,\beta)}(x)=\prod_{n=0}^{\infty}e^{f_nP_n^{(\alpha,\beta)}(x)}, \ \ x\in[-1,1],
\end{equation}
where the number $f_n$ is said to be the $n$-th multiplicative Jacobi-Fourier coefficient and 
\begin{equation}\label{Jac-Mul-44}
\begin{split}
    f_n&=\frac{\ln \left( \langle f,\tilde{P}_n^{(\alpha,\beta)}\rangle_{*,\omega} \right)}{\ln\left( \|\tilde{P}_n^{(\alpha,\beta)}\|_{*,\omega}^2 \right)}\\
    &=  \frac{2n+\alpha+\beta+1}{2^{\alpha+\beta+1}}\frac{n!\Gamma(n+\alpha+\beta+1)}{\Gamma(n+\alpha+1)\Gamma(n+\beta+1)} \int_{-1}^1\ln(f(x))P_{n}^{(\alpha,\beta)}(x)\omega(x)dx.
\end{split}
\end{equation}
Moreover, the multiplicative Jacobi-Fourier series converges to $f$ in the following way
\begin{equation}\label{Jac-Mul-48}
\lim_{N\to \infty} \left\| \frac{f}{\prod_{n=0}^{N}e^{f_n}\odot \tilde{P}_n^{(\alpha,\beta)}}  \right\|_{*,\omega}=1.
\end{equation}
\end{theorem}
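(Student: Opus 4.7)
The plan is to reduce everything to the classical Jacobi--Fourier expansion of $\ln f$ by means of Lemma \ref{Jac-Mul-43}, exploiting the fact that the definition \eqref{Jac-Mul-28} gives $\ln \tilde{P}_n^{(\alpha,\beta)} = P_n^{(\alpha,\beta)}$ and the observation that $e^{f_n}\odot \tilde{P}_n^{(\alpha,\beta)}(x) = e^{f_n P_n^{(\alpha,\beta)}(x)}$ follows directly from the definition of $\odot$.

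First, since $f\in \mathbf{L}_{*}^{2}([-1,1],\omega)$, Lemma \ref{Jac-Mul-43} implies that $\ln f \in \mathbf{L}^{2}([-1,1],\omega)$. Because $\{P_n^{(\alpha,\beta)}\}_{n\geq 0}$ is a complete orthogonal basis of $\mathbf{L}^{2}([-1,1],\omega)$, the classical Jacobi--Fourier expansion applies and yields a unique sequence $\{c_n\}_{n\geq 0}$ with
\begin{equation*}
\ln f(x) = \sum_{n=0}^{\infty} c_n P_n^{(\alpha,\beta)}(x), \qquad c_n = \frac{\langle \ln f, P_n^{(\alpha,\beta)}\rangle_{\omega}}{\|P_n^{(\alpha,\beta)}\|^2_{\omega}},
\end{equation*}
with convergence in the $\mathbf{L}^{2}([-1,1],\omega)$-norm as in \eqref{Jac-Mul-56}. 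Exponentiating both sides pointwise, and converting the sum into a product via $\exp$, I obtain the representation \eqref{Jac-Mul-47} with $f_n = c_n$. To match the formula \eqref{Jac-Mul-44}, I apply Lemma \ref{Jac-Mul-43} to rewrite $\langle \ln f, P_n^{(\alpha,\beta)}\rangle_{\omega} = \ln(\langle f, \tilde{P}_n^{(\alpha,\beta)}\rangle_{*,\omega})$ and $\|P_n^{(\alpha,\beta)}\|_\omega^2 = \ln(\|\tilde{P}_n^{(\alpha,\beta)}\|_{*,\omega}^2)$; the explicit value of the denominator then comes directly from Proposition \ref{Jac-Mul-39} together with the orthogonality constant \eqref{Jac-Mul-30}. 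Uniqueness of the $f_n$ transfers from the classical setting, since any two multiplicative expansions of $f$ would yield, after taking logarithms, two classical expansions of $\ln f$.

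For the convergence claim \eqref{Jac-Mul-48}, the key identity is
\begin{equation*}
\ln\!\left(\frac{f(x)}{\prod_{n=0}^{N} e^{f_n}\odot \tilde{P}_n^{(\alpha,\beta)}(x)}\right) = \ln f(x) - \sum_{n=0}^{N} f_n P_n^{(\alpha,\beta)}(x),
\end{equation*}
which follows from $\ln(e^{f_n}\odot \tilde{P}_n^{(\alpha,\beta)}(x)) = f_n P_n^{(\alpha,\beta)}(x)$. Applying \eqref{Jac-Mul-42} to the quotient on the left then gives
\begin{equation*}
\ln\!\left\| \frac{f}{\prod_{n=0}^{N} e^{f_n}\odot \tilde{P}_n^{(\alpha,\beta)}} \right\|_{*,\omega}^{2} = \left\| \ln f - \sum_{n=0}^{N} f_n P_n^{(\alpha,\beta)} \right\|_{\omega}^{2},
\end{equation*}
and the right-hand side tends to $0$ as $N\to\infty$ by \eqref{Jac-Mul-56}. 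Hence the multiplicative norm on the left tends to $e^{0}=1$, which is precisely \eqref{Jac-Mul-48}.

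The main obstacle, beyond the bookkeeping, is making sure that the algebraic manipulations involving $\odot$, $\oplus$ and $\exp$ are handled correctly so that the telescoping $\ln\bigl(e^{f_n}\odot \tilde{P}_n^{(\alpha,\beta)}\bigr)=f_n P_n^{(\alpha,\beta)}$ holds termwise and the product in \eqref{Jac-Mul-47} genuinely corresponds to the exponentiated classical series; once this correspondence is firmly in place the rest is an application of Lemma \ref{Jac-Mul-43}, Proposition \ref{Jac-Mul-39}, and the known completeness of $\{P_n^{(\alpha,\beta)}\}_{n\geq 0}$ in $\mathbf{L}^{2}([-1,1],\omega)$.
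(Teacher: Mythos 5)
Your proposal is correct and follows essentially the same route as the paper's own proof: reduce to the classical Jacobi--Fourier expansion of $\ln f$ via Lemma \ref{Jac-Mul-43}, identify the coefficients through \eqref{Jac-Mul-45}--\eqref{Jac-Mul-42} and Proposition \ref{Jac-Mul-39}, and obtain \eqref{Jac-Mul-48} by taking logarithms of the multiplicative norm and invoking \eqref{Jac-Mul-56}. No substantive differences to report.
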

\begin{proof}
    Since $f\in \mathbf{L}_{*}^{2}\left([-1,1],\omega\right)$, from Lemma \ref{Jac-Mul-43} we have $\ln (f)\in \mathbf{L}^{2}\left([-1,1],\omega\right)$. 
    Since $\{P_n^{(\alpha,\beta)} \}_{n\geq 0} $ is a complete (total) basis for the weighted Hilbert space $\mathbf{L}^{2}\left([-1,1],\omega\right)$, then $\ln f$ admits the following expansion 
    \begin{equation}\label{Jac-Mul-46}
    \ln (f(x))=\sum_{n=0}^{\infty}f_nP_n^{(\alpha,\beta)}(x), \ \ x\in [-1,1],
    \end{equation}
    where $f_n=\frac{  \langle \ln(f),P_n^{(\alpha,\beta)}\rangle_{\omega} }{ \|P_n^{(\alpha,\beta)}\|_{\omega}^2}$. From these coefficients we can deduce the multiplicative Jacobi-Fourier coefficients as in \eqref{Jac-Mul-44}, by using \eqref{Jac-Mul-28-b}, \eqref{Jac-Mul-45} and \eqref{Jac-Mul-42}. In a similar way, \eqref{Jac-Mul-47} is immediate from \eqref{Jac-Mul-46}.

Moreover, since the Jacobi-Fourier series converges to $\ln f$ in the weighted norm, then from \eqref{Jac-Mul-28-b} and \eqref{Jac-Mul-42} we have
\begin{equation*}
\lim_{N \to \infty} \left( \ln \left\| \frac{f}{\prod_{n=0}^{N}e^{f_n}\odot \tilde{P}_n^{(\alpha,\beta)}}  \right\|_{*,\omega}\right)=\lim_{N \to \infty}\left\| \ln f-\sum_{n=0}^{N}f_nP_n^{(\alpha,\beta)} \right\|_{\omega}=0,
\end{equation*} 
 which is equivalent to \eqref{Jac-Mul-48}.
\end{proof}

Notice that $\{ P_n^{(\alpha,\beta)}\}_{n\geq 0}$ constitutes a basis in the linear space of real polynomials on $[-1,1]$, and thus any such polynomial of degree $N$ can be (uniquely) expressed as $\pi(x)=\sum_{n=0}^N f_n P_n^{(\alpha,\beta)}(x)$. As a consequence, we have the following result. 
\begin{corollary}\label{Jac-Mul-51}
    Let $r(x)$ be a real polynomial of degree $N$ on $[-1,1]$ and be $a\in \mathbb{R}_e$. If $f(x)=a^{r(x)}=e^{r(x)\ln a}$, then
    \begin{equation*}
        f(x)=\prod_{n=0}^{N}e^{f_n} \odot \tilde{P_n}^{(\alpha,\beta)}(x)=\prod_{n=0}^{N}e^{f_nP_n^{(\alpha,\beta)}(x)}, \ \ x\in[-1,1],
    \end{equation*}
where
\begin{equation}\label{Jac-Mul-57}
 f_n=\frac{2n+\alpha+\beta+1}{2^{\alpha+\beta+1}}\frac{n!\Gamma(n+\alpha+\beta+1)}{\Gamma(n+\alpha+1)\Gamma(n+\beta+1)} \int_{-1}^1\ln(a)r(x)P_{n}^{(\alpha,\beta)}(x)\omega(x)dx.   
\end{equation}
\end{corollary}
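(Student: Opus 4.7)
The plan is to deduce Corollary 3.7 as an essentially immediate consequence of Theorem 5.1, by observing that $\ln f(x) = r(x)\ln a$ is itself a polynomial of degree at most $N$, so the corresponding classical Jacobi-Fourier expansion terminates.

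First I would verify that $f \in \mathbf{L}^2_{*}([-1,1], \omega)$. By Lemma 2.7 (\ref{Jac-Mul-43}), this is equivalent to showing $\ln f \in \mathbf{L}^2([-1,1], \omega)$. Since $\ln f(x) = r(x)\ln a$ is a polynomial on the compact interval $[-1,1]$, it is bounded there, and $\omega$ is integrable on $[-1,1]$ for $\alpha,\beta>-1$; hence $\int_{-1}^{1}(r(x)\ln a)^2 \omega(x)\,dx < \infty$, so $\ln f \in \mathbf{L}^2([-1,1], \omega)$ and consequently $f \in \mathbf{L}^2_{*}([-1,1], \omega)$. This allows me to invoke Theorem \ref{Jac-Mul-54}, obtaining the expansion
\begin{equation*}
f(x)=\prod_{n=0}^{\infty}e^{f_n P_n^{(\alpha,\beta)}(x)}, \qquad f_n=\frac{\langle \ln f, P_n^{(\alpha,\beta)}\rangle_{\omega}}{\|P_n^{(\alpha,\beta)}\|_{\omega}^2}.
\end{equation*}

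Next I would argue that the product truncates at $n=N$. Since $\{P_0^{(\alpha,\beta)},\dots,P_N^{(\alpha,\beta)}\}$ forms a basis of the space of real polynomials of degree at most $N$ on $[-1,1]$, the polynomial $\ln f = r(x)\ln a$ can be written uniquely as $\sum_{n=0}^{N} c_n P_n^{(\alpha,\beta)}(x)$ for suitable scalars $c_n$. By the orthogonality relation (\ref{Jac-Mul-30}), taking the classical inner product with $P_m^{(\alpha,\beta)}$ for $m > N$ yields $\langle \ln f, P_m^{(\alpha,\beta)}\rangle_{\omega}=0$, so $f_m = 0$ for $m > N$; in other words $e^{f_m P_m^{(\alpha,\beta)}(x)} = 1$ and these factors contribute trivially to the multiplicative product. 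Hence the expansion reduces to the finite product over $n=0,1,\dots,N$.

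Finally, I would plug $\ln f(x) = r(x)\ln a$ directly into the integral expression for $f_n$ from Theorem \ref{Jac-Mul-54} (formula (\ref{Jac-Mul-44})), which produces exactly (\ref{Jac-Mul-57}). Uniqueness of the coefficients already follows from the uniqueness clause in Theorem \ref{Jac-Mul-54}, or equivalently from the linear independence of the basis $\{P_n^{(\alpha,\beta)}\}_{n=0}^{N}$. There is no serious obstacle here; the only subtlety worth a sentence is justifying that the infinite multiplicative product collapses to a finite one, which as noted above is immediate from orthogonality together with $\deg(\ln f)\le N$.
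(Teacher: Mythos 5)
Your proposal is correct and follows essentially the same route as the paper: the paper deduces the corollary directly from the preceding remark that $\{P_n^{(\alpha,\beta)}\}_{n\geq 0}$ is a basis of the real polynomials, so $\ln f=r(x)\ln a$ has a terminating Jacobi expansion whose coefficients are given by Theorem \ref{Jac-Mul-54}. Your additional verification that $f\in \mathbf{L}^2_{*}([-1,1],\omega)$ and the explicit orthogonality argument for the truncation only make explicit what the paper treats as immediate.
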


In fact, when considering a product of a polynomial function and an exponential function whose exponent is a polynomial, it is possible to combine classical series with multiplicative series in order to obtain a series representation with a finite number of terms. In general, for a product of functions, classical and multiplicative series can be combined to obtain a better approximation of the product function. 
\begin{corollary}\label{Jac-Mul-200}
        Let $r(x)$ and $\pi(x)$ be real polynomials of degree $N$ and $M$ on $[-1,1]$, respectively, and be $a\in \mathbb{R}_e$. If $f(x)=\pi(x)a^{r(x)}=\pi(x)e^{r(x)\ln a}$, then
    \begin{equation*}
        f(x)=\left(\sum_{n=0}^{M}c_nP_n^{(\alpha,\beta)}(x)\right)\left(\prod_{n=0}^{N}e^{f_nP_n^{(\alpha,\beta)}(x)}\right), \ \ x\in[-1,1],
    \end{equation*}
where $f_n$ is as in \eqref{Jac-Mul-57} and
\begin{equation}\label{Jac-Mul-58}
    c_n=\frac{2n+\alpha+\beta+1}{2^{\alpha+\beta+1}}\frac{n!\Gamma(n+\alpha+\beta+1)}{\Gamma(n+\alpha+1)\Gamma(n+\beta+1)} \int_{-1}^1\pi(x)P_{n}^{(\alpha,\beta)}(x)\omega(x)dx.
\end{equation}
In general, if $r(x)$ and $\pi(x)$ are arbitrary functions with $r(x)$ positive on $[-1,1]$, then
\begin{equation}\label{Jac-Mul-59}
f(x)=\pi(x)r(x)=\pi(x)e^{\ln(r(x))}=\left(\sum_{n=0}^{\infty}c_nP_n^{(\alpha,\beta)}(x)\right)\left(\prod_{n=0}^{\infty}e^{f_nP_n^{(\alpha,\beta)}(x)}\right), \ \ x\in[-1,1],
\end{equation}
where $c_n$ is as in \eqref{Jac-Mul-58} and $f_n$ is as in \eqref{Jac-Mul-44} with $f(x)=r(x)$.
\end{corollary}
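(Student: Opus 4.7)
The plan is to split the statement into the polynomial case (first assertion) and the general case (second assertion), and to handle each by factoring $f(x)=\pi(x)\cdot r(x)$ (respectively $\pi(x)\cdot e^{r(x)\ln a}$) into a classical part and a multiplicative part, applying the already established expansions to each factor independently, and then multiplying the two resulting representations.

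For the polynomial case, I would first apply Corollary \ref{Jac-Mul-51} directly to the factor $a^{r(x)}=e^{r(x)\ln a}$, which yields exactly the finite multiplicative expansion $a^{r(x)}=\prod_{n=0}^{N}e^{f_n P_n^{(\alpha,\beta)}(x)}$ with coefficients $f_n$ given by \eqref{Jac-Mul-57}. Independently, the factor $\pi(x)$ lies in the $(M{+}1)$-dimensional subspace of real polynomials of degree at most $M$, which is spanned by $\{P_0^{(\alpha,\beta)},\ldots,P_M^{(\alpha,\beta)}\}$ (since these are polynomials of exact degrees $0,1,\ldots,M$). Hence $\pi$ admits a unique expansion $\pi(x)=\sum_{n=0}^{M}c_n P_n^{(\alpha,\beta)}(x)$, and projecting against $P_n^{(\alpha,\beta)}$ with the orthogonality relation \eqref{Jac-Mul-30} forces the coefficients to take the closed form \eqref{Jac-Mul-58}. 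Multiplying the two representations completes the first part.

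For the general case, the factorisation $f=\pi\cdot e^{\ln r}$ is valid because $r$ is positive on $[-1,1]$, so $\ln r$ is a well-defined real-valued function. Assuming $\pi\in \mathbf{L}^2([-1,1],\omega)$, the classical completeness property \eqref{Jac-Mul-56} produces the expansion $\pi(x)=\sum_{n=0}^{\infty} c_n P_n^{(\alpha,\beta)}(x)$ (in the $\mathbf{L}^2$-norm), with $c_n$ still given by \eqref{Jac-Mul-58}. Assuming additionally that $r\in \mathbf{L}^2_{*}([-1,1],\omega)$ (equivalently, by Lemma \ref{Jac-Mul-43}, that $\ln r\in \mathbf{L}^2([-1,1],\omega)$), Theorem \ref{Jac-Mul-54} applied to $r$ gives $r(x)=\prod_{n=0}^{\infty}e^{f_n P_n^{(\alpha,\beta)}(x)}$ with $f_n$ as in \eqref{Jac-Mul-44} for the function $r$. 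Multiplying yields \eqref{Jac-Mul-59}.

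The main obstacle I anticipate is not the algebraic manipulation itself, which is essentially bookkeeping, but rather the precise sense of convergence in the general case: the classical sum converges in the $\mathbf{L}^2$-norm and the multiplicative product converges in the sense of \eqref{Jac-Mul-48}, so asserting that their pointwise product equals $f$ requires either justifying term-by-term manipulation on a set where both series converge, or interpreting \eqref{Jac-Mul-59} formally as the identity obtained by multiplying the two independent expansions. I would follow the paper's convention and state the identity at the level of the expansions, noting that the convergence of each factor is inherited from Theorem \ref{Jac-Mul-54} and from \eqref{Jac-Mul-56} respectively. The polynomial case carries no such subtlety because both expansions are finite sums and the identity is literal.
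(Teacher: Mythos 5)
Your proposal is correct and follows essentially the same route as the paper, which states this corollary without a separate proof precisely because it reduces to expanding $\pi(x)$ in the classical Jacobi basis (finite sum by degree, coefficients via orthogonality \eqref{Jac-Mul-30}) and applying Corollary \ref{Jac-Mul-51}, respectively Theorem \ref{Jac-Mul-54}, to the exponential factor, then multiplying. Your closing remark on the sense of convergence in the infinite case is a fair observation, but the paper likewise interprets \eqref{Jac-Mul-59} at the level of the two expansions, each converging in its own weighted norm.
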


Notice that in \eqref{Jac-Mul-59} it can happen that $\pi(x)$ or $\ln(r(x))$ are polynomials. In such a case, the expression can be reduced to a finite sum or product.

Now, we present some examples to illustrate that the use of multiplicative Jacobi-Fourier series to approximate positive functions can be more convenient than the use of classical series.
\begin{example}
    Consider the Gaussian function $f(x)=e^{-100\left(x-\frac{1}{5}\right)^2}$, $x\in[-1,1]$. This function has a fast decay (exponential decay) to zero and, as a consequence, if it is approximated in a classical way by using partial sums of Jacobi-Fourier expansions of the form
\begin{equation}\label{Jac-Mul-40} e^{-100\left(x-\frac{1}{5}\right)^2} \approx \sum_{n=0}^N \frac{\langle e^{-100\left(x-\frac{1}{5}\right)^2},P_n^{(\alpha,\beta)} \rangle_{\omega}}{\|P_n^{(\alpha,\beta)} \|_{\omega}} P_{n}^{(\alpha,\beta)}(x)=\Sigma(N,\alpha,\beta),
\end{equation}
then a considerable error is obtained, mainly around the symmetry axis $x=0.2$ and in the boundary points $x=\pm1$, as showed in Figures \ref{fig:Legendre-2} and \ref{fig:Legendre-3}, where the plots of $f$ and $\Sigma(20,0,0)$ are presented using classical Legendre polynomials with $N=20$
\begin{multicols}{2}
\begin{figure}[H]
\centering
\includegraphics[width=6cm, height=3.5cm]{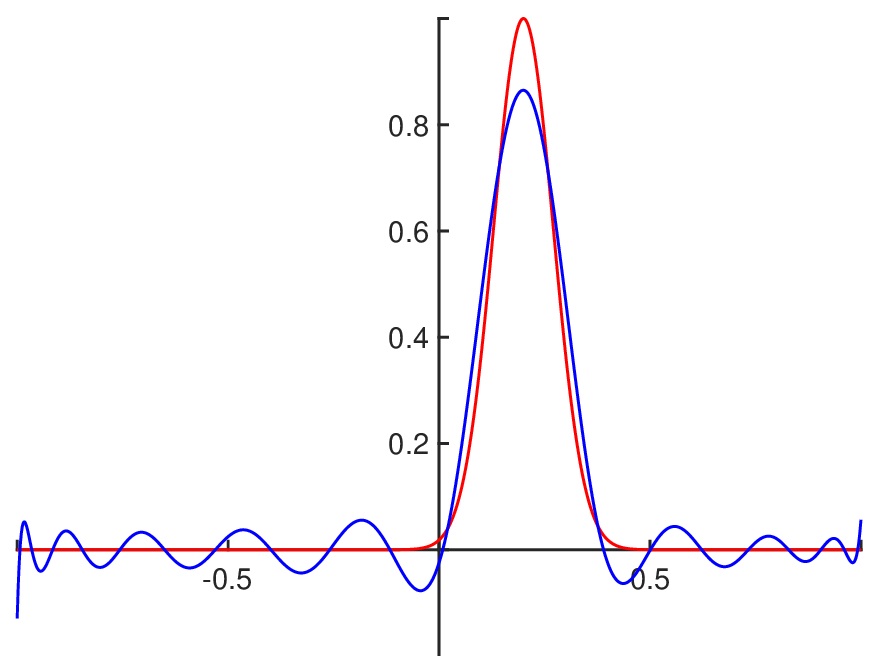}
\caption{$f(x)$ (\textcolor{red}{\rule{0.5cm}{0.5mm}}) and the associated Legendre approximation $\Sigma(20,0,0)$ (\textcolor{blue}{\rule{0.5cm}{0.5mm}}) on $[-1,1]$.}
\label{fig:Legendre-2}
\end{figure}
\columnbreak
\begin{figure}[H]
\centering
\includegraphics[width=6cm, height=3.5cm]{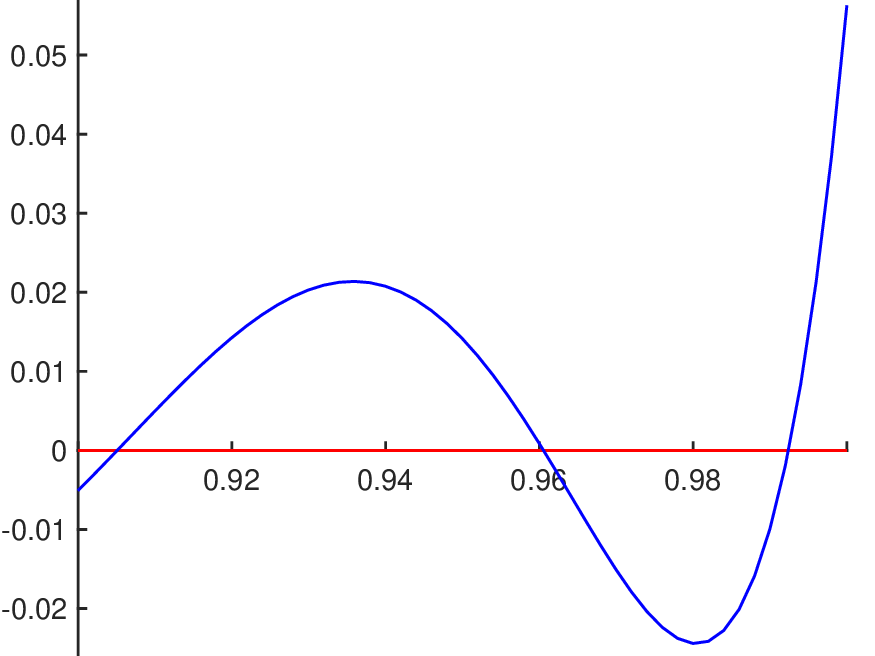}
\caption{Close up of $f$ and $\Sigma(20,0,0)$ on $[0.9, 1 ]$.}
\label{fig:Legendre-3}
\end{figure}
\end{multicols}
Furthermore, if $\alpha,\beta$ are positive real numbers, then the approximation error increases even more. A better approximation is presented in \cite{Iserles,Jesus2} by using Fourier series similar to \eqref{Jac-Mul-40}, but using Sobolev orthogonal polynomials, which have in addition a coherence property (for more details in Sobolev polynomials and the coherence property, we refer the reader to \cite{Marcellan}). However, even with Sobolev orthogonality, the approximation at the center ($x=0.2$ in this example) and the endpoints is rather slow (see Figure 3 from \cite{Jesus2}). On the other hand, if the multiplicative Jacobi polynomials are used as basis, faster results are achieved. For instance, using he multiplicative Legendre polynomials ($\alpha=\beta=0$, $\tilde{P}_n^{(\alpha,\beta)}(x)=\tilde{P}_n(x)$) and Corollary \eqref{Jac-Mul-51}, we get
\begin{equation*}
e^{-100\left(x-\frac{1}{5}\right)^2}=\prod_{n=0}^2e^{f_nP_n(x)}, \ \ f_n=\frac{2n+1}{2}\int_{-1}^1\left( -100 \left( x-\frac{1}{5}\right)^2 \right)P_n(x)dx,
\end{equation*}
\begin{equation*}
f_0=\frac{1}{2}\left(-\frac{224}{3} \right), \quad f_1=\frac{3}{2}\left(\frac{80}{3} \right), \quad f_2=\frac{5}{2}\left(-\frac{80}{3} \right).
\end{equation*}

\begin{figure}[H]
\centering
\includegraphics[width=8.3cm, height=4.5cm]{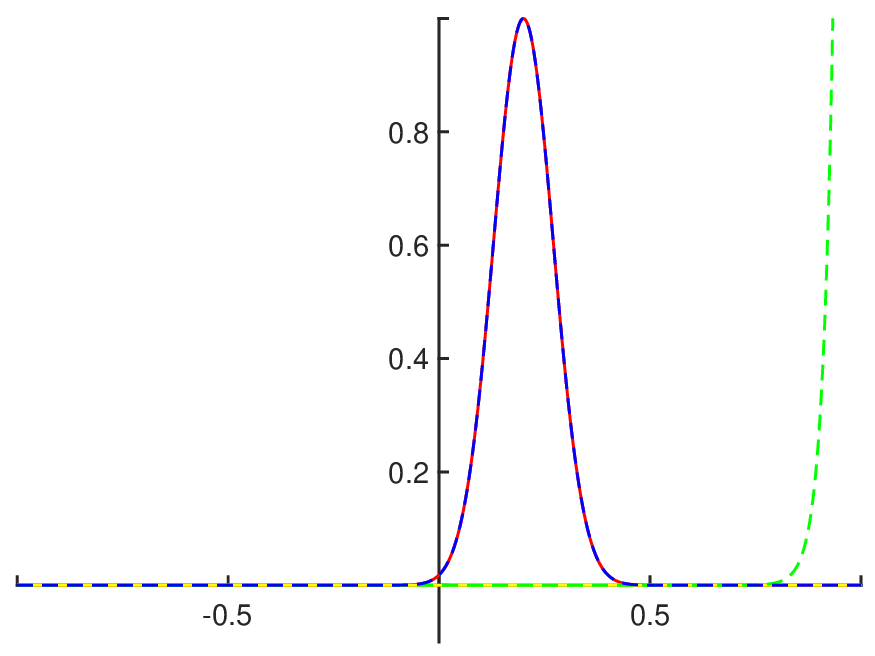}
\caption{Plots $f(x)$ (\textcolor{red}{\rule{0.5cm}{0.5mm}}), 
$e^{f_0P_0(x)}$ (\textcolor{yellow}{\rule{0.5cm}{0.5mm}}),
$\prod_{n=0}^1e^{f_nP_n(x)}$ (\textcolor{green}{\rule{0.5cm}{0.5mm}}), and
$\prod_{n=0}^2e^{f_nP_n(x)}$ (\textcolor{blue}{\rule{0.5cm}{0.5mm}})
on $[-1, 1]$.}
\label{fig:Legendre-15}
\end{figure}

\noindent Thus, the approximation process using multiplicative series is more efficient if the function has exponential nature, as can be seen in Figure \ref{fig:Legendre-15}.
\end{example}

\begin{example}
Consider the function $f(x)=\frac{1}{1+25x^2}$, $x\in[-1,1]$, which has the particularity that, when approximated by  Lagrange polynomials by using a equidistant partition (or equidistant nodes) of the interval $[-1,1]$, has a large oscillation near the endpoints  (see \cite{Burden}). This is known as Runge phenomena (see Figure \ref{fig:Legendre-10}).
\begin{figure}[H]
\centering
\includegraphics[width=8.5cm, height=3.8cm]{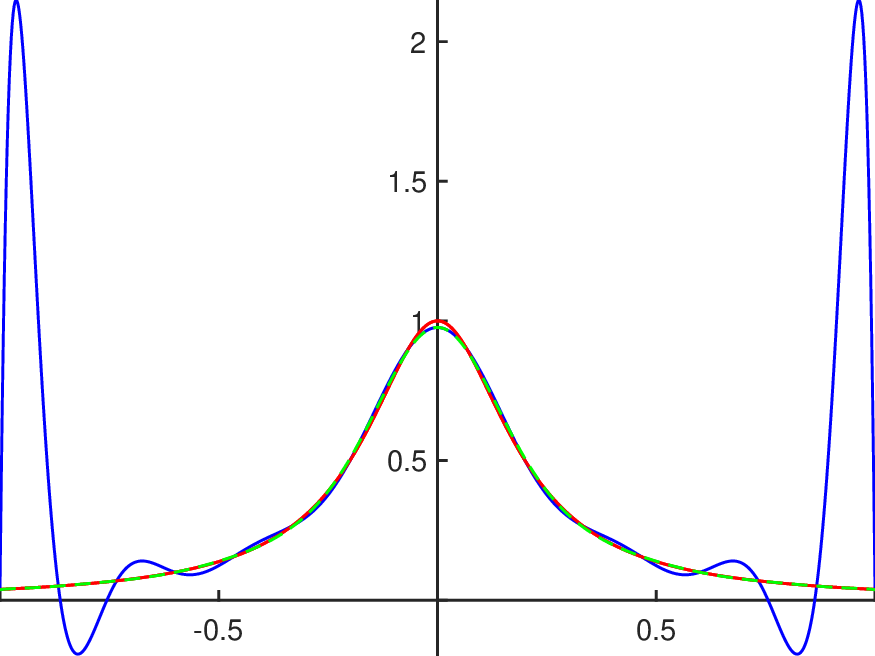}
\caption{Plots $f(x)=\frac{1}{1+25x^2}$ (\textcolor{red}{\rule{0.5cm}{0.5mm}}), Lagrange interpolation polynomial of degree $15$ (\textcolor{blue}{\rule{0.5cm}{0.5mm}}) and $\Pi(15)$ (\textcolor{green}{\rule{0.5cm}{0.5mm}}) on $[-1, 1]$.}
\label{fig:Legendre-10}
\end{figure}
Several methods have been developed to correct this phenomena. For instance, when choosing the interpolation nodes as the zeros of classical Chebyshev polynomials of the first kind, which are not equidistant and are more concentrated near the endpoints, the approximation can be improved \cite{Boy}. Also, in some cases, approximating the function by orthogonal functions that are not polynomials yields better results \cite{Boy}. 

Another way to mitigate these oscillations in positive functions with rapid growth or decay is the use of multiplicative polynomials, as shown below. For instance, taking $\alpha=\beta=0$, the function $f$ can be approximated by Legendre and multiplicative Legendre series in the following way
\begin{equation*}
  \begin{split}
     \frac{1}{1+25x^2} & \approx \sum_{n=0}^{N}\frac{2n+1}{2}\left(\int_{-1}^{1} \frac{1}{1+25x^2}P_n(x)dx \right) P_n(x)=\Sigma(N), \\
       \frac{1}{1+25x^2} & \approx \prod_{n=0}^{N}\exp\left(\frac{2n+1}{2} \left(\int_{-1}^{1} \ln\left(\frac{1}{1+25x^2}\right) P_n(x) dx \right) P_n(x) \right)=\Pi(N).
  \end{split}
\end{equation*}
Figure \ref{fig:Legendre-10} shows that $\Pi(15)$ removes the Runge phenomena in the approximation. Moreover, Figure \ref{fig:Legendre-40} shows that $\Pi(15)$ approximates $f$ uniformly better than $\Sigma(15)$. In fact,  $\Sigma(15)$ does not remove the oscillations in the boundary points as well as $\Pi(15)$, as can be seen in Figure \eqref{fig:Legendre-4}.
\end{example}

The previous examples show that approximating positive functions using multiplicative Jacobi series can be more efficient that using classical Jacobi series. However, this is not necessarily the case for all positive functions. Depending of the behavior of the function, the use of classical Jacobi series can be more effective. For instance, if $f(x)=\pi(x)=x+2$, which is positive on $[-1,1]$, the Jacobi multiplicative series \eqref{Jac-Mul-47} would have an infinite number of terms, whereas the Jacobi-Fourier classical series would simply be a linear combination of polynomials of degree $0$ and $1$. Thus, the use of a multiplicative series here would not be appropriated.

\begin{multicols}{2}
\begin{figure}[H]
\centering
\includegraphics[width=6.0cm, height=3.5cm]{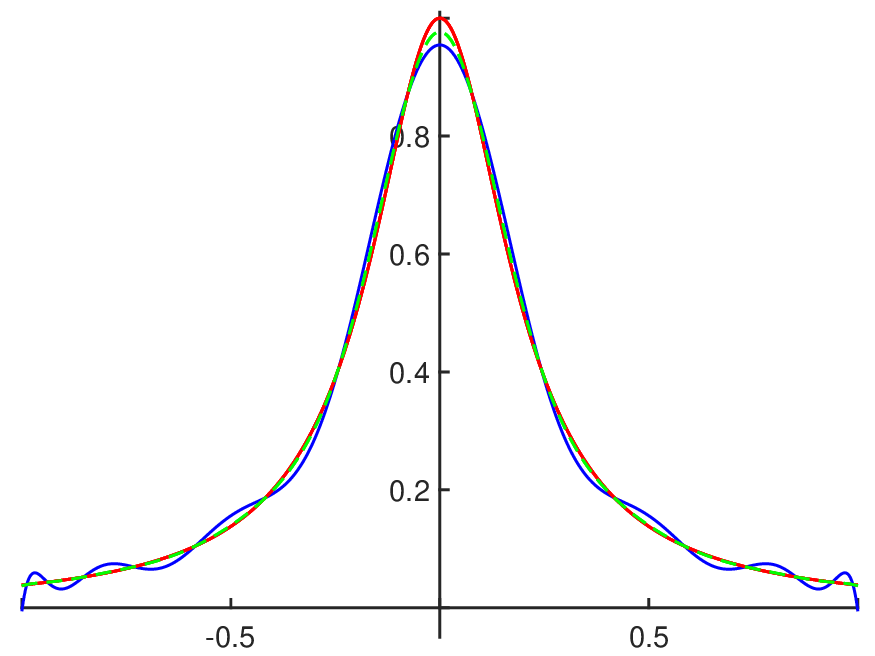}
\caption{Plots $f(x)$ (\textcolor{red}{\rule{0.5cm}{0.5mm}}), $\Sigma(15)$ (\textcolor{blue}{\rule{0.5cm}{0.5mm}}) and $\Pi(15)$ (\textcolor{green}{\rule{0.5cm}{0.5mm}}) on $[-1, 1]$.}
\label{fig:Legendre-40}
\end{figure}
\columnbreak
\begin{figure}[H]
\centering
\includegraphics[width=6.0cm, height=3.5cm]{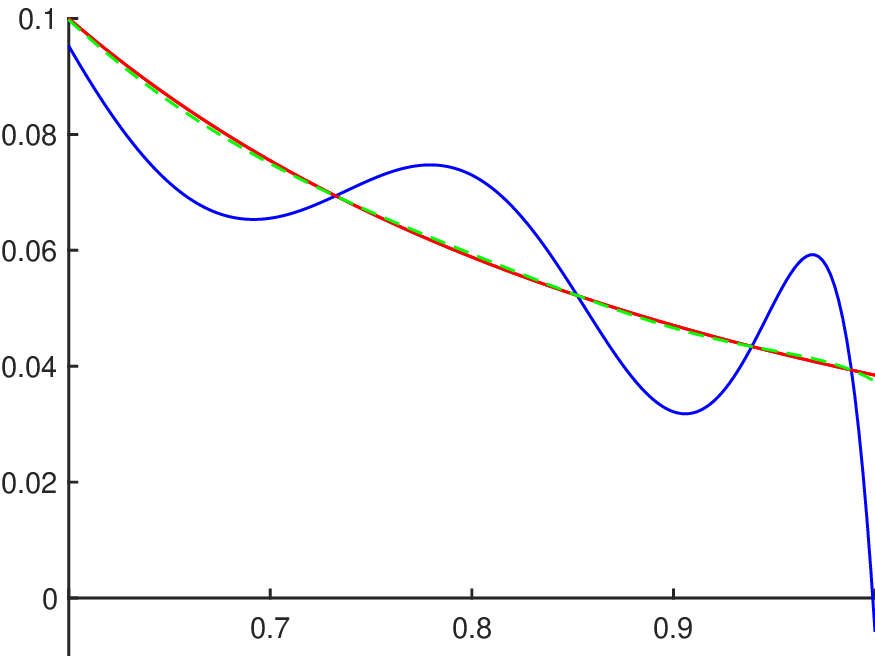}
\caption{Plots $f(x)$ (\textcolor{red}{\rule{0.5cm}{0.5mm}}), $\Sigma(15)$ (\textcolor{blue}{\rule{0.5cm}{0.5mm}}) and $\Pi(15)$ (\textcolor{green}{\rule{0.5cm}{0.5mm}}) on $[0.6, 1]$.}
\label{fig:Legendre-4}
\end{figure}
\end{multicols}

Now, we present some cases in which either classical or multiplicative approximation methods might be recommended to achieve more efficient results. Note that these conclusions are based on several numerical experiments and are presented without formal proof. As with any approximation technique, it is crucial to understand the behavior of the function to select the most appropriate method. Applying an approximation method without knowledge of the function's behavior can lead to unexpected results and significant errors in practical applications. To address this, we present the following definitions.

\begin{definition}
    A function $f:[a,b]\rightarrow \mathbb{R}$ is called:
\begin{enumerate}
    \item Of polynomial order on $[a,b]$ if there exists an non-negative integer $n$ and a positive constant $k$ such that
    \begin{equation*}
        |f(x)|<k|x|^n, \ \mbox{ for all } x\in[a,b].
    \end{equation*}
    This is commonly denoted by  $f\sim\mathcal{O}(x^n)$.
    \item Of exponential order on $[a,b]$ if there exists a positive constants $k$ and $m$ such that 
    \begin{equation*}
        |f(x)|<ke^{m|x|}, \ \mbox{ for all } x\in[a,b].
    \end{equation*}
    They are denoted by $f\sim\mathcal{O}(e^{m|x|})$.
\end{enumerate}
\end{definition}


Functions of polynomial order grow slowly and are dominated by the term $x^n$, which implies that 
$f$ does not grow faster than a polynomial of degree $n$. As a consequence, Jacobi orthogonal polynomials and their associated Fourier series are powerful tools for approximating these kinds of functions because they take advantage of their structure (see \cite{Boy,Burden}). On the other hand, functions of exponential order grow faster than functions of polynomial order and exhibit more rapid oscillation. This results in a slower approximation process with Jacobi-Fourier series, requiring polynomials of higher degree. In contrast, multiplicative Jacobi polynomials and their associated multiplicative Jacobi series offer a better approximation because they have an exponential form. However, determining whether a function has polynomial or exponential order can sometimes be complicated, as a given function can exhibit characteristics of both polynomial and exponential order, as demonstrated by the following result. For such functions, deciding which approximation yields better results is, in general, an open problem.

\begin{lemma}
    Let $\pi,r:[a,b]\rightarrow \mathbb{R}$ be real functions and $f(x)=\pi(x)r(x)$. 
    \begin{enumerate}
        \item If $\pi\sim\mathcal{O}(x^n)$ and $r$ is bounded, then $f\sim\mathcal{O}(x^n)$.
        \item If $r\sim\mathcal{O}(e^{m|x|})$ and $\pi$ is bounded, then $f\sim\mathcal{O}(e^{m|x|})$.
    \end{enumerate}
\end{lemma}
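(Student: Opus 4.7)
The plan is to unpack the definitions of polynomial order and exponential order and combine them with the boundedness hypothesis via the multiplicative identity $|f(x)| = |\pi(x)|\,|r(x)|$. Both parts are direct consequences of absorbing the bound of one factor into the constant of the other, and the argument is essentially identical in structure for the two items.

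For part $(1)$, I would start from the hypothesis that $\pi\sim\mathcal{O}(x^n)$, which yields a non-negative integer $n$ and a positive constant $k_1$ with $|\pi(x)|<k_1|x|^n$ for every $x\in[a,b]$. Since $r$ is bounded on $[a,b]$, there exists $M>0$ such that $|r(x)|\leq M$ on the same interval. Multiplying the two bounds and setting $k:=k_1 M$ gives
\begin{equation*}
|f(x)| = |\pi(x)|\,|r(x)| < k_1|x|^n \cdot M = k\,|x|^n, \qquad x\in[a,b],
\end{equation*}
which is precisely the condition $f\sim\mathcal{O}(x^n)$.

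Part $(2)$ follows in the same way: the hypothesis $r\sim\mathcal{O}(e^{m|x|})$ provides positive constants $k_2$ and $m$ with $|r(x)|<k_2 e^{m|x|}$ on $[a,b]$, and the boundedness of $\pi$ supplies $M>0$ with $|\pi(x)|\leq M$. Multiplying and taking $k:=k_2 M$ yields $|f(x)|<k\,e^{m|x|}$ on $[a,b]$, so $f\sim\mathcal{O}(e^{m|x|})$.

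There is no real obstacle here; the only subtlety is that the definition as written uses a strict inequality while the boundedness of the second factor is naturally stated with a non-strict one. This is easily reconciled either by replacing $\leq M$ with $< M+1$, or by observing that the strict inequality in the hypothesis on $\pi$ (resp.\ $r$) is preserved after multiplication by the non-negative constant $M$ provided $M>0$, which we may always arrange (if $r\equiv 0$ or $\pi\equiv 0$ the conclusion is trivial). Hence the proof reduces to the two displayed chains of inequalities above.
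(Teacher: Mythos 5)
Your proof is correct and follows essentially the same route as the paper's: bound the second factor by a constant on $[a,b]$ and absorb it into the constant in the order hypothesis, giving $|f(x)|<k|x|^n$ (resp.\ $|f(x)|<k e^{m|x|}$). The side remark on strict versus non-strict inequalities is a harmless refinement that the paper simply glosses over.
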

\begin{proof}
  For the first statement, since $r(x)$ is bounded there exists a constant $k\in \mathbb{R}_e$ such that $|r(x)|<k$. Therefore
    \begin{equation*}
        |f(x)|=|\pi(x)r(x)|<k|\pi(x)|, \ \ x\in[a,b],
    \end{equation*}
    as $\pi\sim\mathcal{O}(x^n)$ the result is immediate. The second statement can be proven in similar way.
\end{proof}

The following is an example of a function satisfying the hypotheses of the previous lemma, i.e. the function has exponential and polynomial order. In this case, it is convenient to use Corollary \eqref{Jac-Mul-200}, since $f$ is the product of a polynomial and an exponential function. 

\begin{example}
Consider the function $f(x)=(2x^2+x^4)2^{\cos(10x)}$, $x\in[-1,1]$. In this case we have $\pi(x)=(2x^2+x^4)$ and $r(x)=2^{\cos(10x)}$. Notice that $f$ has both polynomial and exponential growth, since
\begin{equation*}
    |f(x)|\leq 2(2x^2+x^4)\ \mbox{and} \
    |f(x)|\leq 3e^{\ln2}e^{\ln(2)|x| }, \ \ \mbox{for all } \ x\in[-1,1].
\end{equation*}
We will use both approximations by using Chebyshev polynomials of the first kind ($\alpha=\beta=-\frac{1}{2}$): 
\begin{equation*}
\begin{split}
    (2x^2+x^4)2^{\cos(10x)} & \approx \frac{1}{\pi}\int_{-1}^1\frac{f(x)dx}{\sqrt{1-x^2}}+\frac{2}{\pi}\sum_{n=1}^{N}\left(\int_{-1}^1\frac{f(x)T_n(x)dx}{\sqrt{1-x^2}}\right) T_n(x)=\Sigma(N), \\
    (2x^2+x^4)2^{\cos(10x)} & \approx \exp \left(\frac{1}{\pi}\int_{-1}^{1}\frac{\ln f(x)dx}{\sqrt{1-x^2}} \right)\prod_{n=1}^{N}\exp \left(\left(\frac{2}{\pi}\int_{-1}^{1}\frac{\ln (f(x))T_n(x)dx}{\sqrt{1-x^2}} \right)T_n(x)\right) =\Pi(N).
\end{split}
\end{equation*}
The results are shown in Figures \ref{fig:Legendre-50} and \ref{fig:Legendre-51}. Notice that $\Sigma(15)$ and $\Sigma(25)$ present a better approximation than $\Pi(15)$ and $\Pi(25)$, respectively. However, convergence is slow in both cases, since the function is the product of a polynomial function and an exponential function. 
\begin{multicols}{2}
\begin{figure}[H]
\centering
\includegraphics[width=6.0cm, height=3.5cm]{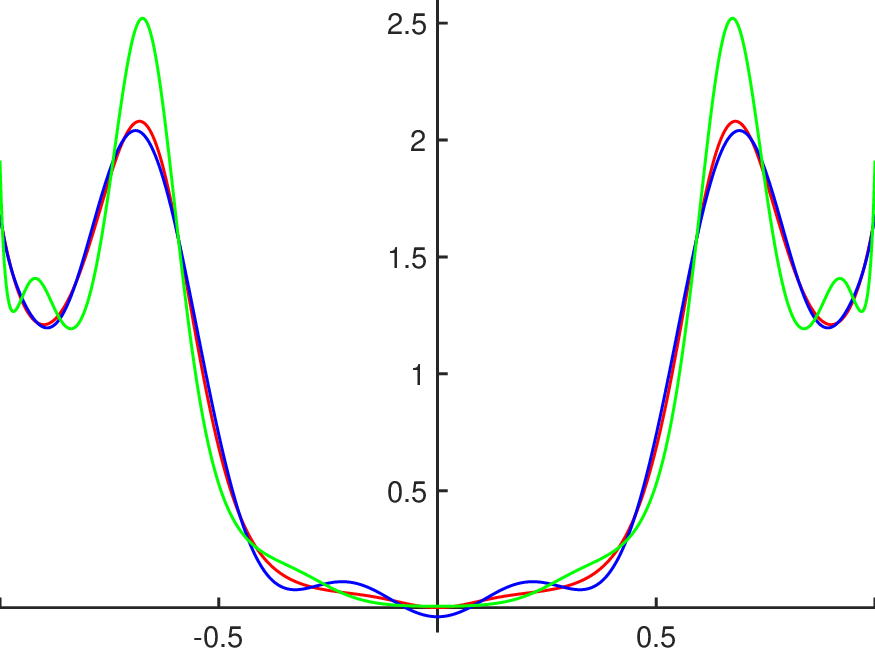}
\caption{Plots $f(x)$ (\textcolor{red}{\rule{0.5cm}{0.5mm}}), $\Sigma(15)$ (\textcolor{blue}{\rule{0.5cm}{0.5mm}}) and $\Pi(15)$ (\textcolor{green}{\rule{0.5cm}{0.5mm}}) on $[-1, 1]$.}
\label{fig:Legendre-50}
\end{figure}
\columnbreak
\begin{figure}[H]
\centering
\includegraphics[width=6.0cm, height=3.5cm]{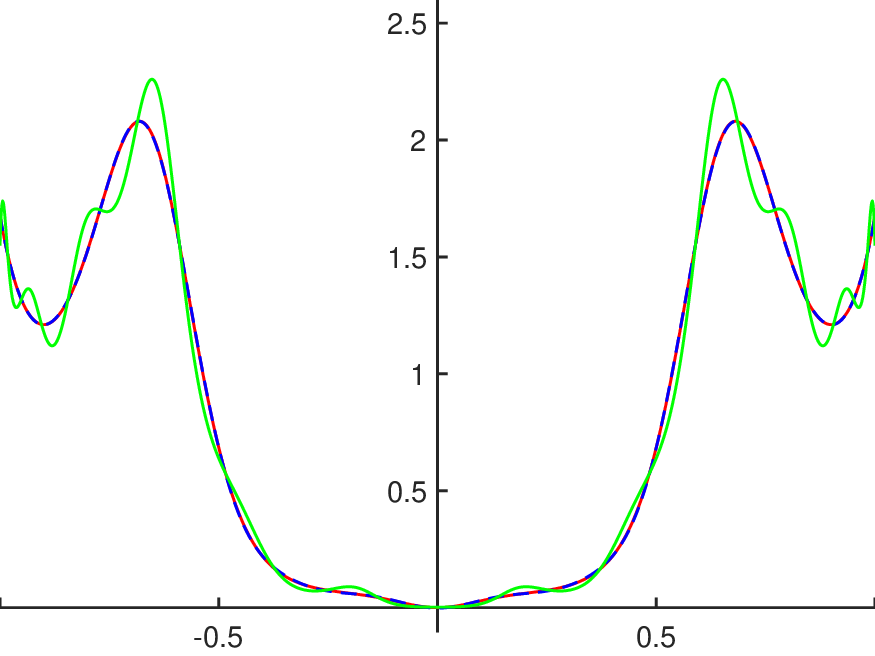}
\caption{Plots $f(x)$ (\textcolor{red}{\rule{0.5cm}{0.5mm}}), $\Sigma(25)$ (\textcolor{blue}{\rule{0.5cm}{0.5mm}}) and $\Pi(25)$ (\textcolor{green}{\rule{0.5cm}{0.5mm}}) on $[-1, 1]$.}
\label{fig:Legendre-51}
\end{figure}
\end{multicols}

On the other hand, since $f$ is the product of $\pi(x)=2x^2+x^4$ and $r(x)=2^{\cos(10x)}$, then we can approximate $f$ using \eqref{Jac-Mul-59}, i.e.
\begin{equation*}
    f(x)\approx (2x^2+x^4)\left( \prod_{n=0}^{N}e^{f_n T_n(x)} \right)=\textsc{P}\Pi(N),
\end{equation*}
where $f_0=\frac{\ln 2}{\pi}\int_{-1}^{1}\cos(10x)\frac{dx}{\sqrt{1-x^2}}$ and  $f_n=\frac{2\ln 2}{\pi}\int_{-1}^{1}\cos(10x)T_n(x)\frac{dx}{\sqrt{1-x^2}}$. That is, we only approximate $r$ by a multiplicative series, since for $N\geq 4$ we have $\pi=\Sigma(4)$. Figures \ref{fig:Legendre-54} and \ref{fig:Legendre-55} show the results of the approximation. It is easy to see that $\textsc{P}\Pi(15)$ approximate $f$ better than $\Sigma(15)$ and $\Pi(15)$, since the non-polynomial part is approximated by multiplicative polynomials.
\begin{multicols}{2}
\begin{figure}[H]
\centering
\includegraphics[width=6.0cm, height=3.5cm]{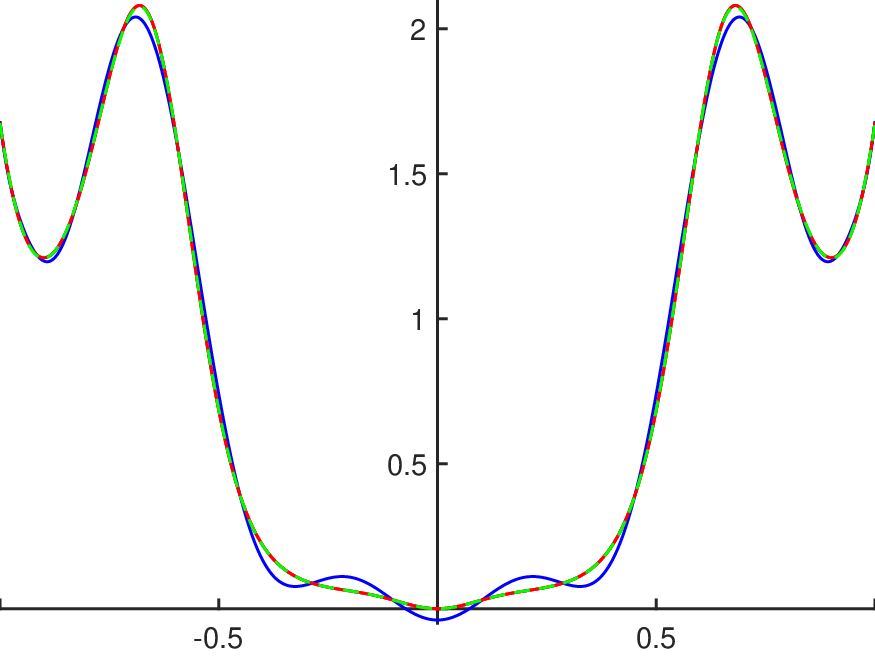}
\caption{Plots $f(x)$ (\textcolor{red}{\rule{0.5cm}{0.5mm}}), $\Sigma(15)$ (\textcolor{blue}{\rule{0.5cm}{0.5mm}}) and $\textsc{P}\Pi(15)$ (\textcolor{green}{\rule{0.5cm}{0.5mm}}) on $[-1, 1]$.}
\label{fig:Legendre-54}
\end{figure}
\columnbreak
\begin{figure}[H]
\centering
\includegraphics[width=6.0cm, height=3.5cm]{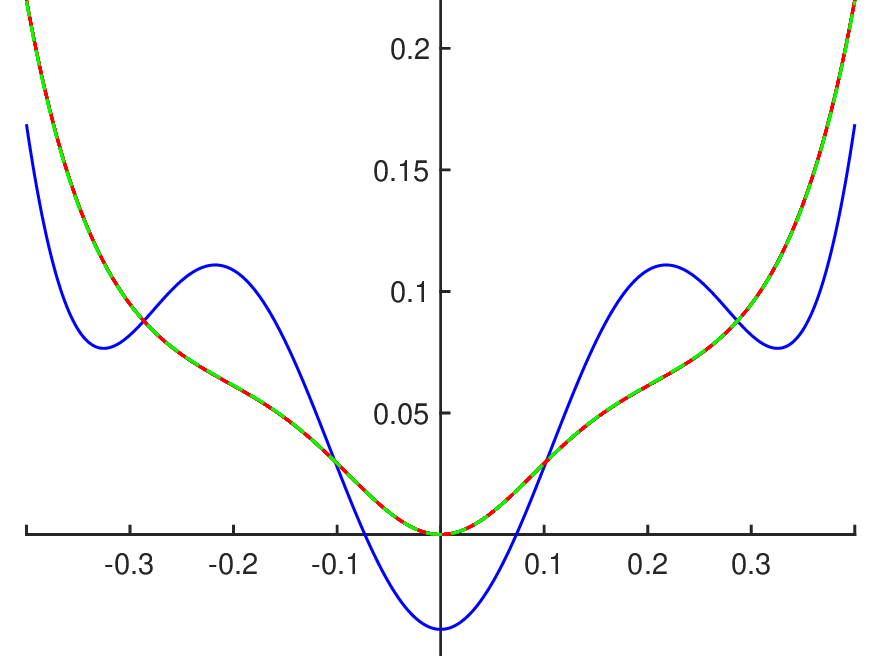}
\caption{Plots $f(x)$ (\textcolor{red}{\rule{0.5cm}{0.5mm}}), $\Sigma(15)$ (\textcolor{blue}{\rule{0.5cm}{0.5mm}}) and $\textsc{P}\Pi(15)$ (\textcolor{green}{\rule{0.5cm}{0.5mm}}) on $[-0.4, 0.4]$.}
\label{fig:Legendre-55}
\end{figure}
\end{multicols} 
In summary, if the behavior of the function is understood, it is possible to suggest a more precise approximation.
\end{example}

Finally, notice that the convergence of the series is with respect to the weighted norms \eqref{Jac-Mul-56} and \eqref{Jac-Mul-48}, and this means that there could be some points on $[-1,1]$ where the classical series approximation outperforms the multiplicative series approximation. In fact, uniform convergence is not guaranteed in any of the two approximations.


\section*{Acknowledgements}

The work of the second author was supported by Conahcyt Grant CBF2023-2024-625. The work of the first and third author has been supported by the Direcci\'on General de Investigaciones de la Universidad de los Llanos. 

\section*{References}

\end{document}